\def\R{\mathbb{R}}
\newtheorem{nnassumption}{\bf Assumption}
\newtheorem{nntheorem}{\bf Theorem}
\newenvironment{theorem}{\begin{nntheorem}\it}{\end{nntheorem}}
\newtheorem{nncorollary}{\bf Corollary}
\newtheorem{nndefinition}{\bf Definition}
\newtheorem{nnproposition}{\bf Proposition}
\newtheorem{nnproblem}{\bf Problem}
\newtheorem{nnlemma}{\bf Lemma}
\newenvironment{lemma}{\begin{nnlemma}\it}{\end{nnlemma}}
\newtheorem{nnremark}{\bf Remark}
\newenvironment{remark}{\begin{nnremark} \rm }{\hfill \hspace*{1pt}\hfill $\circ$\end{nnremark}}
\newtheorem{nnexample}{\bf Example}
\newenvironment{proof}{{\bf Proof.}}{\hfill \hspace*{1pt}\hfill $\Box$}
\begin{document}
%
\title{Local output feedback stabilization of a Reaction-Diffusion equation with saturated actuation}
%
%
%

\author{Hugo~Lhachemi, Christophe Prieur
\thanks{Hugo Lhachemi is with Universit{\'e} Paris-Saclay, CNRS, CentraleSup{\'e}lec, Laboratoire des signaux et syst{\`e}mes, 91190, Gif-sur-Yvette, France (e-mail: hugo.lhachemi@centralesupelec.fr).  The work of H. Lhachemi has been partially supported by ANR PIA funding: ANR-20-IDEES-0002.
\newline\indent Christophe Prieur is with Universit{\'e} Grenoble Alpes, CNRS, Grenoble-INP, GIPSA-lab, F-38000, Grenoble, France (e-mail: christophe.prieur@gipsa-lab.fr). The work of C. Prieur has been partially supported by MIAI@Grenoble Alpes (ANR-19-P3IA-0003).
}
}

%
%

\markboth{}%
{Lhachemi \MakeLowercase{\textit{et al.}}}
%



\maketitle

\begin{abstract}
This paper is concerned with the output feedback stabilization of a reaction-diffusion equation by means of bounded control inputs in the presence of saturations. Using a finite-dimensional controller composed of an observer coupled with a finite-dimensional state-feedback, we derive a set of conditions ensuring the stability of the closed-loop plant while estimating the associated domain of attraction in the presence of saturations. This set of conditions is shown to be always feasible for an order of the observer selected large enough. The stability analysis relies on Lyapunov functionals along with a generalized sector condition classically used to study the stability of linear finite-dimensional plants in the presence of saturations.
\end{abstract}

\begin{IEEEkeywords}
Saturated control, domain of attraction, reaction-diffusion equation, output feedback.
\end{IEEEkeywords}

%
\IEEEpeerreviewmaketitle

\section{Introduction}\label{sec: Introduction}

Saturation mechanisms are ubiquitous in practical applications and impose severe constraints on control design~\cite{bernstein1995chronological}. Even in the favorable case of finite-dimensional linear time-invariant systems, saturations are well-known for their capability to introduce instabilities mechanisms as well as the loss of global stability properties~\cite{campo1990robust}. We refer to~\cite{tarbouriech2011stability,zaccarian2011modern} for comprehensive introductions to the topic of feedback control in the presence of saturations.

The impact of input saturations on the application of different control strategies for infinite-dimensional plants has been investigated in a number of papers. Among the very first contributions in this research direction, one can find the seminal works~\cite{slemrod1989feedback,lasiecka2003strong}. The saturation mechanisms considered therein are expressed in terms of control input functions evaluated in the norm of an abstract functional space. However, saturations encountered in practical applications generally consist in pointwise saturation mechanisms. Such pointwise saturation scenarios have been the topic of a number of recent papers. The stabilization of different types of PDEs, including e.g. wave and Korteweg-de Vries equations, under cone-bounded feedback (which in particular include saturations) have been reported in~\cite{prieur2016wave,marx2017cone,marx2017global} based on Lyapunov stability analysis methods. The feedback stabilization of reaction-diffusion equations with control input constraints was reported in~\cite{dubljevic2006predictive} by means of model predictive control and in~\cite{el2003analysis} by means of singular perturbation techniques. The use of spectral reduction methods for the state-feedback of a reaction-diffusion equation was recently reported in~\cite{mironchenko2020local} with explicit estimation of the domain of attraction based on linear (LMIs) and bilinear (BMIs) matrix inequalities.

This paper considers the problem of local output feedback stabilization of a reaction-diffusion equation by means of saturated control inputs. In this setting, the control inputs apply in the domain by means of a bounded operator while the observation can take the form of either a bounded or an unbounded measurement operator. The latter scenario covers the cases of Dirichlet and Neumann measurements. The adopted approach relies on spectral-reduction methods which have been extensively used in a great variety of contexts for the control of parabolic PDEs~\cite{russell1978controllability,coron2004global,lhachemi2020feedback,lhachemi2020pi,lhachemi2021indomain,mironchenko2020local}. This is particularly because the state-feedback setting for the control of parabolic PDEs is well-known to allow the design of the control strategy on a finite dimensional truncated model capturing the unstable dynamics while ensuring the preservation of the stability properties of the residual infinite-dimension dynamics~\cite{russell1978controllability}. With such a state-feedback control strategy, the presence of saturated actuators induce that the domain of attraction is only limited by the domain of attraction of the finite-dimensional truncated model while the initial conditions of the residual modes are unconstrained. We refer to~\cite[Proposition~1]{mironchenko2020local} for a precise statement of this result. In the case of an output feedback strategy such as the ones described in this paper, such a decoupling is not possible in general. This is in particular the reason why the subsets of the domain of attraction of the closed-loop plant that we derive in this paper impose constraints on the initial conditions of all the modes of the reaction-diffusion plant simultaneously.

The employed control strategy takes the form of a finite-dimensional controller composed of a finite-dimensional observer coupled with a finite-dimensional state-feedback~\cite{sakawa1983feedback,balas1988finite,curtain1982finite}. We take advantage of the control architecture initially reported in~\cite{sakawa1983feedback} augmented with the LMI-based procedure introduced in~\cite{katz2020constructive}. This approach has been extended in~\cite{lhachemi2020finite} to the case of boundary control and a either Dirichlet or Neumann measurement, as well as in~\cite{lhachemi2020finitePI} for PI regulation control. In this paper, the presence of the input saturation is handled in the stability analysis by invoking a generalized sector condition~\cite[Lem.~1.6]{tarbouriech2011stability}. While this sector condition has emerged as the predominant technique to study the stability of saturated linear finite-dimensional plants, this paper reports its very first use in the context of output feedback stabilization of infinite-dimensional systems. Combining this generalized sector condition with a Lyapunov-based analysis procedure adapted from~\cite{lhachemi2020finite}, we derive different sets of explicit constraints ensuring the exponential stability of the system trajectories for all initial conditions belonging to a subset of the region of attraction. It is worth noting that, due to the use of the generalized sector condition, certain of these initial conditions may induce the actual saturation of the command during the transient. This is in sharp contrast with a number of works in which very conservative subsets of the domain of attraction of the PDE system are obtained by restraining the norm of the initial condition so that the saturation mechanism is never active, see e.g. \cite{kang2017boundary} in a state feedback context. The obtained subset of the domain of attraction is characterized by the decision variables of the abovementionned constraints. In this process, we show that the order of the controller can always be selected large enough so that the abovementionned constraints can be satified. 

The rest of this paper is organized as follows. After introducing some notations and general properties of the Sturm-Liouville operators, the control design problem is introduced in Section~\ref{sec: preliminaries} for a bounded measurement operator. The controller architecture, the stability analysis, and the subsequent stability properties are presented in Section~\ref{sec: control design}. The theoretical results are then illustrated based on numerical computations in Section~\ref{sec: numerical example}. The extension of the stability results to the cases of Dirichlet and Neumann boundary measurement are reported in Section~\ref{sec: extension}. Finally, concluding remarks are formulated in Section~\ref{sec: conclusion}.

\section{Notation, properties, and problem description}\label{sec: preliminaries}

\subsection{Notation}

Spaces $\R^n$ are endowed with the Euclidean norm denoted by $\Vert\cdot\Vert$. The associated induced norms of matrices are also denoted by $\Vert\cdot\Vert$. For any $x,y\in\R^n$, $x \leq y$ means that each component of $x$ is less than or equal to the corresponding component of $y$. For any $x \in \R^n$ we denote by $\vert x \vert$ the vector of $\R^n$ obtained by replacing each component of $x$ by its absolute value. Given two vectors $X$ and $Y$, $ \mathrm{col} (X,Y)$ denotes the vector $[X^\top,Y^\top]^\top$. $L^2(0,1)$ stands for the space of square integrable functions on $(0,1)$ and is endowed with the inner product $\langle f , g \rangle = \int_0^1 f(x) g(x) \,\mathrm{d}x$ with associated norm denoted by $\Vert \cdot \Vert_{L^2}$. For an integer $m \geq 1$, the $m$-order Sobolev space is denoted by $H^m(0,1)$ and is endowed with its usual norm denoted by $\Vert \cdot \Vert_{H^m}$. For a symmetric matrix $P \in\R^{n \times n}$, $P \succeq 0$ (resp. $P \succ 0$) means that $P$ is positive semi-definite (resp. positive definite) while $\lambda_M(P)$ (resp. $\lambda_m(P)$) denotes its maximal (resp. minimal) eigenvalue.

Let $(\phi_n)_{n \geq 1}$ be a Hilbert basis of $L^2(0,1)$. For any integers $1 \leq N < M$, we define the operators of projection $\pi_{N} : L^2(0,1) \rightarrow \R^{N}$ and $\pi_{N,M} : L^2(0,1) \rightarrow \R^{M-N}$ by setting $\pi_{N} f = \begin{bmatrix} \left< f , \phi_1 \right> & \ldots & \left< f , \phi_{N} \right> \end{bmatrix}^\top$ and $\pi_{N,M} f = \begin{bmatrix} \left< f , \phi_{N+1} \right> & \ldots & \left< f , \phi_{M} \right> \end{bmatrix}^\top$. We also define $\mathcal{R}_{N} : L^2(0,1) \rightarrow L^2(0,1)$ by $\mathcal{R}_{N} f = f - \sum_{n=1}^{N} \left< f, \phi_n \right> \phi_n = \sum_{n \geq N+1} \left< f, \phi_n \right> \phi_n$.

\subsection{Properties of Sturm-Liouville operators}\label{subsec: Properties of Sturm-Liouville operators}

Let $\theta_1,\theta_2\in[0,\pi/2]$, $p \in \mathcal{C}^1([0,1])$, and $q \in \mathcal{C}^0([0,1])$ with $p > 0$ and $q \geq 0$. Consider the Sturm-Liouville operator $\mathcal{A} : D(\mathcal{A}) \subset L^2(0,1) \rightarrow L^2(0,1)$ defined by $\mathcal{A}f = - (pf')' + q f$ on the domain $D(\mathcal{A}) = \{ f \in H^2(0,1) \,:\, \cos(\theta_1) f(0) - \sin(\theta_1) f'(0) = 0 , \, \cos(\theta_2) f(1) + \sin(\theta_2) f'(1) = 0 \}$. The eigenvalues $\lambda_n$, $n \geq 1$, of $\mathcal{A}$ are simple, non-negative, and form an increasing sequence with $\lambda_n \rightarrow + \infty$ as $n \rightarrow + \infty$. Moreover, the associated unit eigenvectors $\phi_n \in L^2(0,1)$ form a Hilbert basis. We also have $D(\mathcal{A}) = \{ f \in L^2(0,1) \,:\, \sum_{n\geq 1} \vert \lambda_n \vert ^2 \vert \left< f , \phi_n \right> \vert^2 < +\infty \}$ and $\mathcal{A}f = \sum_{n \geq 1} \lambda_n \left< f , \phi_n \right> \phi_n$. For $f \in D(\mathcal{A})$ we consider $\mathcal{A}^{1/2}f = \sum_{n \geq 1} \lambda_n^{1/2} \left< f , \phi_n \right> \phi_n$.

Let $p_*,p^*,q^* \in \R$ be such that $0 < p_* \leq p(x) \leq p^*$ and $0 \leq q(x) \leq q^*$ for all $x \in [0,1]$, then it holds~\cite{orlov2017general}:
\begin{equation}\label{eq: estimation lambda_n}
0 \leq \pi^2 (n-1)^2 p_* \leq \lambda_n \leq \pi^2 n^2 p^* + q^*
\end{equation}
for all $n \geq 1$. Assuming further that $q > 0$, an integration by parts and the continuous embedding $H^1(0,1) \subset L^\infty(0,1)$ (see, e.g., \cite[Thm.~8.8]{brezis2011functional}) show the existence of constants $C_1,C_2 > 0$ such that we have
\begin{align}
C_1 \Vert f \Vert_{H^1}^2 \leq 
\sum_{n \geq 1} \lambda_n \left< f , \phi_n \right>^2
= \left< \mathcal{A}f , f \right>
\leq C_2 \Vert f \Vert_{H^1}^2 \label{eq: inner product Af and f}
\end{align}
for all $f \in D(\mathcal{A})$. In that case, we easily deduce that $f(0) = \sum_{n \geq 1} \left< f , \phi_n \right> \phi_n(0)$ and $f'(0) = \sum_{n \geq 1} \left< f , \phi_n \right> \phi_n'(0)$ hold for all $f \in D(\mathcal{A})$. Finally, if we further assume that $p \in \mathcal{C}^2([0,1])$, we obtain that $\phi_n(0) = O(1)$ and $\phi_n'(0) = O(\sqrt{\lambda_n})$ as $n \rightarrow +\infty$~\cite{orlov2017general}.

\subsection{Problem description}

We consider the reaction-diffusion equation with Robin boundary conditions described by
\begin{subequations}\label{eq: distributed measurement - RD system}
\begin{align}
& z_t(t,x) = \left( p(x) z_x(t,x) \right)_x - \tilde{q}(x) z(t,x) \label{eq: distributed measurement - RD system - 1} \\
& \phantom{z_t(t,x) = }\; + \sum_{k=1}^m b_k(x) u_{\mathrm{sat},k}(t) \nonumber \\
&\cos(\theta_1) z(t,0) - \sin(\theta_1) z_x(t,0) = 0 \label{eq: distributed - RD system - 2} \\
&\cos(\theta_2) z(t,1) + \sin(\theta_2) z_x(t,1) = 0 \label{eq: distributed - RD system - 3} \\
& z(0,x) = z_0(x) . \label{eq: distributed - RD system - 4}
\end{align}
\end{subequations}
Here we have $\theta_1,\theta_2 \in [0,\pi/2]$, $p \in \mathcal{C}^1([0,1])$ with $p > 0$, and $\tilde{q} \in \mathcal{C}^0([0,1])$. Moreover $b_k \in L^2(0,1)$ represents the manner the scalar control input $u_{\mathrm{sat},k}(t) \in\R$ acts on the system, $z_0 \in L^2(0,1)$ is the initial condition, and $z(t,\cdot) \in L^2(0,1)$ is the state of the reaction-diffusion PDE. We introduce without loss of generality\footnote{For example one can select $q_c = - \min\tilde{q}$ and $q(x) = \tilde{q}(x)+q_c$.} a function $q \in \mathcal{C}^0([0,1])$ and a constant $q_c \in \R$ such that 
\begin{equation}\label{eq: decomposition of the reaction term}
\tilde{q} = q - q_c , \quad q \geq 0 .
\end{equation}
Hence (\ref{eq: distributed measurement - RD system}) can be rewritten in abstract form as 
\begin{subequations}\label{eq: distributed measurement - RD system - abstract}
\begin{align}
& z_t(t,\cdot) = \left\{ -\mathcal{A} + q_c \mathrm{Id}_{L^2} \right\} z(t,\cdot) + \sum_{k=1}^m b_k u_{\mathrm{sat},k}(t) \\
& z(0,\cdot) = z_0  
\end{align}
\end{subequations}
where $\mathcal{A}$ is the Sturm-Liouville operator defined in Subsection~\ref{subsec: Properties of Sturm-Liouville operators}. Reaction-diffusion PDEs with in-domain actuation described by (\ref{eq: distributed measurement - RD system}) are encountered in practical applications such as thermonuclear fusion with Tokamaks~\cite{mavkov2017distributed}, stabilization of fronts in chemical reactors~\cite{smagina2002stabilization}, surface decontamatination~\cite{wang2021delay}, and population dynamics~\cite{wang2020regional}.

We start this study by assuming that the measurement takes the form of a bounded observation operator, i.e., 
\begin{equation}\label{eq: bounded measurement operator}
y(t) = \int_0^1 c(x) z(t,x) \,\mathrm{d}x 
\end{equation}
for some $c \in L^2(0,1)$. This type of measurement covers the case of in-domain sensors measuring an averaged value of the state in a spatial neighborhood. It is worth noting that bounded observation operators are generally easier to handle from a control design perspective. The extension of the control design procedure from bounded to unbounded observation operators is reported in Section~\ref{sec: extension}.

The application of the control input $u_{\mathrm{sat},k}(t)$ is assumed to be subject to saturations. More specifically, we define for an arbitrary $l > 0$ the saturation function $\mathrm{sat}_l : \R \rightarrow \R$ as
\begin{equation*}
\mathrm{sat}_l(v) = 
\left\{\begin{split} 
v & \qquad\mathrm{if}\; \vert v \vert \leq l ; \\ 
l \frac{v}{\vert v \vert} & \qquad\mathrm{if}\; \vert v \vert \geq l .
\end{split}\right. 
\end{equation*}
For a given $\ell = \begin{bmatrix} l_1 & l_2 & \ldots & l_m \end{bmatrix}^\top\in(\R_{>0})^m$, we define $\mathrm{sat}_\ell : \R^m \rightarrow \R^m$ as
\begin{equation*}
\mathrm{sat}_\ell(v) = \begin{bmatrix} \mathrm{sat}_{l_1}(v_1) & \mathrm{sat}_{l_2}(v_2) & \ldots & \mathrm{sat}_{l_m}(v_m) \end{bmatrix}^\top .
\end{equation*}
Hence, we assume that the actually applied control $u_{\mathrm{sat},k}(t)$ is linked to the nominally designed control input $u_{k}(t)$ by
\begin{equation*}
u_{\mathrm{sat},k}(t) = \mathrm{sat}_{l_k}(u_k(t)) 
\end{equation*}
for all $1 \leq k \leq m$. Introducing 
\begin{align*}
u(t) & = \begin{bmatrix} u_1(t) & u_2(t) & \ldots & u_m(t) \end{bmatrix}^\top \in\R^m , \\
u_\mathrm{sat}(t) & = \begin{bmatrix} u_{\mathrm{sat},1}(t) & u_{\mathrm{sat},2}(t) & \ldots & u_{\mathrm{sat},m}(t) \end{bmatrix}^\top \in\R^m
\end{align*}
the latter identity reads in compact form
\begin{equation*}
u_\mathrm{sat}(t) = \mathrm{sat}_\ell(u(t)) . 
\end{equation*}

The objective is to design a finite-dimensional output feedback controller in order to achieve the local stabilization of (\ref{eq: distributed measurement - RD system}) with measurement (\ref{eq: bounded measurement operator}) while estimating the associated domain of attraction in the presence of the saturating control inputs. To do so, it is relevant to introduce the deadzone nonlinearity $\phi_\ell : \R^m \rightarrow \R^m$ defined for any $v\in\R^m$ by
\begin{equation}\label{eq: deadzone nonlinearity}
\phi_\ell(v) = \mathrm{sat}_\ell(v)-v .
\end{equation}
This representation is mainly motivated by the fact that this deadzone nonlinearity satisfies the following generalized sector condition borrowed from~\cite[Lem.~1.6]{tarbouriech2011stability}.

\begin{lemma}\label{eq: generalized sector condition}
Let $\ell \in(\R_{>0})^m$ be given. For any $v,\omega \in\R^m$ such that $\vert v-\omega \vert \leq \ell$ and any diagonal positive definite matrix $\mathbf{T} \in \R^{m \times m}$ we have $\phi_\ell(v)^\top \mathbf{T} ( \phi_\ell(v) + \omega ) \leq 0$.
\end{lemma}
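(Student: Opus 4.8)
The plan is to exploit the diagonal structure of $\mathbf{T}$ to reduce the claim to a componentwise scalar inequality, and then to dispose of that inequality by a short case analysis on the saturation map. Writing $\mathbf{T} = \mathrm{diag}(t_1,\dots,t_m)$ with each $t_k > 0$, the $k$-th component of $\phi_\ell(v)$ equals $\mathrm{sat}_{l_k}(v_k) - v_k$, so the quadratic form expands as $\sum_{k=1}^m t_k \left( \mathrm{sat}_{l_k}(v_k) - v_k \right)\left( \mathrm{sat}_{l_k}(v_k) - v_k + \omega_k \right)$. Since every weight $t_k$ is strictly positive, it suffices to show that each summand is nonpositive; that is, setting $d_k := \mathrm{sat}_{l_k}(v_k) - v_k$, I would prove $d_k (d_k + \omega_k) \leq 0$ under the componentwise constraint $|v_k - \omega_k| \leq l_k$ inherited from $\vert v - \omega \vert \leq \ell$.

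First I would dispatch the unsaturated regime: when $|v_k| \leq l_k$ one has $\mathrm{sat}_{l_k}(v_k) = v_k$, hence $d_k = 0$ and the summand vanishes. Next I would treat the two saturated regimes, where the deadzone acquires a definite sign that is exactly compensated by the constraint. If $v_k > l_k$, then $d_k = l_k - v_k < 0$, while $d_k + \omega_k = l_k - (v_k - \omega_k) \geq 0$ because the constraint gives $v_k - \omega_k \leq l_k$; the product of a nonpositive and a nonnegative factor is nonpositive. Symmetrically, if $v_k < -l_k$, then $d_k = -l_k - v_k > 0$, while $d_k + \omega_k = -l_k - (v_k - \omega_k) \leq 0$ because the constraint gives $v_k - \omega_k \geq -l_k$; again the product is nonpositive. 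Summing over $k$ against the positive weights $t_k$ then yields $\phi_\ell(v)^\top \mathbf{T}(\phi_\ell(v) + \omega) \leq 0$.

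There is no genuine analytical obstacle here: the statement is elementary and follows purely from the geometry of the saturation map together with the positivity and diagonality of $\mathbf{T}$. The only point requiring care, and the conceptual content of the lemma, is recognizing that in each saturated branch the slack variable $\omega_k$, constrained to lie within distance $l_k$ of $v_k$, is precisely enough to force the second factor $d_k + \omega_k$ to carry the sign opposite to $d_k$. This sign-pairing is exactly what makes the generalized sector condition valid on the whole admissible set $\{\,\vert v - \omega \vert \leq \ell\,\}$, and not merely on the region where no saturation is active.
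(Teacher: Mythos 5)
Your proof is correct: the paper does not prove this lemma but imports it from \cite[Lem.~1.6]{tarbouriech2011stability}, and your componentwise case analysis on the three saturation regimes is precisely the standard argument given there. Nothing is missing.
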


\begin{remark}
The problem of saturated feedback control of the reaction-diffusion plant (\ref{eq: distributed measurement - RD system}) and estimation of the associated region of attraction was studied in~\cite{mironchenko2020local} in the case of a state-feedback (the studied setting was focused on Dirichlet boundary conditions but extends in a straightforward manner to Robin boundary conditions). We focus here in this paper onto the case of an output feedback. 
\end{remark}

\section{Control architecture and stability results}\label{sec: control design}

\subsection{Control architecture}\label{subsec: control architecture}

Define the coefficients of projection $z_n(t) = \left< z(t,\cdot) , \phi_n \right>$, $b_{n,k} = \left< b_k , \phi_n \right>$, and $c_{n} = \left< c , \phi_n \right>$. Then the projection of the system trajectories (\ref{eq: distributed measurement - RD system - abstract}) and the output equation (\ref{eq: bounded measurement operator}) into the Hilbert basis $\{ \phi_n \,:\, n \geq 1\}$ gives the following representation:
\begin{subequations}\label{eq: distributed measurement - projection}
\begin{align}
\dot{z}_n(t) & = (-\lambda_n+q_c) z_n(t) + \sum_{k=1}^m b_{n,k} u_{\mathrm{sat},k}(t) \\
y(t) & = \sum_{n \geq 1} c_n z_n(t)  \label{eq: distributed measurement - projection - 2}
\end{align}
\end{subequations}
We consider the feedback law taking the form of a finite-dimensional state-feedback coupled with a finite-dimensional observer~\cite{sakawa1983feedback,katz2020constructive,lhachemi2020finite}. More precisely, let $\delta > 0$ and $N_0 \geq 1$ be such that $-\lambda_n + q_c < - \delta$ for all $n \geq N_0 + 1$. For a given integer $N \geq N_0 +1$ to be selected later, the controller architecture takes the form:
\begin{subequations}\label{eq: control law}
\begin{align}
\dot{\hat{z}}_n(t) & = (-\lambda_n+q_c) \hat{z}_n(t) + \sum_{k=1}^m b_{n,k} u_{\mathrm{sat},k}(t) \\
& \phantom{=}\; - L_n \left\{ \sum_{k=1}^N c_k \hat{z}_k(t) - y(t) \right\}  , \quad 1 \leq n \leq N \nonumber \\
u_k(t) & = \sum_{l=1}^{N_0} K_{k,l} \hat{z}_l(t) , \quad 1 \leq k \leq m
\end{align}
\end{subequations}
with $L_n,K_{k,l}\in\R$ where $L_n = 0$ for $N_0 +1 \leq n \leq N$. See~\cite{sakawa1983feedback} for an early occurrence of such a control architecture.

\begin{remark}
Due to the measurement (\ref{eq: distributed measurement - projection - 2}), the dynamical controller (\ref{eq: control law}) receives in input all the infinite number of modes of the reaction-diffusion plant (\ref{eq: distributed measurement - RD system}). Therefore, the resulting closed-loop system cannot be recasted into the setting of~\cite[Sec.~III.E]{mironchenko2020local} except in the very particular case where $c_n = 0$ for all $n \geq N+1$.
\end{remark}

We define the error signals $e_n(t) = z_n(t) - \hat{z}_n(t)$. Introducing the vectors and matrices defined by $\hat{Z}^{N_0} = \begin{bmatrix} \hat{z}_1 & \ldots & \hat{z}_{N_0} \end{bmatrix}^\top$, $\hat{Z}^{N-N_0} = \begin{bmatrix} \hat{z}_{N_0 + 1} & \ldots & \hat{z}_{N} \end{bmatrix}^\top$, $E^{N_0} = \begin{bmatrix} e_1 & \ldots & e_{N_0} \end{bmatrix}^\top$, $E^{N-N_0} = \begin{bmatrix} e_{N_0 + 1} & \ldots & e_{N} \end{bmatrix}^\top$, $A_0 = \mathrm{diag}(-\lambda_1+q_c,\ldots,-\lambda_{N_0}+q_c)$, $A_1 = \mathrm{diag}(-\lambda_{N_0 + 1}+q_c,\ldots,-\lambda_{N}+q_c)$, $B_0 = (b_{n,k})_{1 \leq n \leq N_0 , 1 \leq k \leq m}$, $B_1 = (b_{n,k})_{N_0 + 1 \leq n \leq N , 1 \leq k \leq m}$, $C_0 = \begin{bmatrix} c_1 & \ldots & c_{N_0} \end{bmatrix}$, $C_1 = \begin{bmatrix} c_{N_0 + 1} & \ldots & c_{N} \end{bmatrix}$, $L = \begin{bmatrix} L_1 & \ldots & L_{N_0} \end{bmatrix}^\top$, and $K = (K_{k,l})_{1 \leq k \leq m , 1 \leq l \leq N_0}$, we infer that
\begin{align*}
\dot{\hat{Z}}^{N_0} & = A_0 \hat{Z}^{N_0} + B_0 u_\mathrm{sat} + L C_0 E^{N_0} + L C_1 E^{N-N_0} + L \zeta \\
\dot{E}^{N_0} & = (A_0 - L C_0) E^{N_0} - L C_1 E^{N-N_0} - L \zeta \\
\dot{\hat{Z}}^{N-N_0} & = A_1 \hat{Z}^{N-N_0} + B_1 u_\mathrm{sat} \\
\dot{E}^{N-N_0} & = A_1 E^{N-N_0}
\end{align*}
with
\begin{equation*}
u = K \hat{Z}^{N_0}
\end{equation*}
and where $\zeta(t) = \sum_{n \geq N+1} c_n z_n(t)$. Using the deadzone nonlinearity (\ref{eq: deadzone nonlinearity}) we obtain that
\begin{align*}
\dot{\hat{Z}}^{N_0} & = (A_0 + B_0 K) \hat{Z}^{N_0} + L C_0 E^{N_0} + L C_1 E^{N-N_0} \\
& \phantom{=}\; + L \zeta + B_0 \phi_\ell(K \hat{Z}^{N_0}) \\
\dot{E}^{N_0} & = (A_0 - L C_0) E^{N_0} - L C_1 E^{N-N_0} - L \zeta \\
\dot{\hat{Z}}^{N-N_0} & = A_1 \hat{Z}^{N-N_0} + B_1 K \hat{Z}^{N_0} + B_1 \phi_\ell(K \hat{Z}^{N_0}) \\
\dot{E}^{N-N_0} & = A_1 E^{N-N_0} .
\end{align*}
Introducing the state-vector
\begin{equation}\label{eq: state-vector closed-loop truncated model}
X = \mathrm{col} ( \hat{Z}^{N_0} , E^{N_0} , \hat{Z}^{N-N_0} , E^{N-N_0} )
\end{equation}
as well as the matrices 
\begin{equation}\label{eq: def matrix F}
F = \begin{bmatrix}
A_0 + B_0 K & L C_0 & 0 & LC_1 \\ 0 & A_0-LC_0 & 0 & -L C_1 \\ B_1 K & 0 & A_1 & 0 \\ 0 & 0 & 0 & A_1
\end{bmatrix} ,
\end{equation}
$\mathcal{L} = \mathrm{col} ( L , -L , 0 , 0 )$ and $\mathcal{L}_\phi = \mathrm{col} ( B_0 , 0 , B_1 , 0 )$, we obtain that
\begin{equation}\label{eq: dynamics truncated model}
\dot{X} = F X + \mathcal{L} \zeta + \mathcal{L}_\phi \phi_\ell(K\hat{Z}^{N_0}) .
\end{equation}
Defining $E = \begin{bmatrix} I & 0 & 0 & 0\end{bmatrix}$ and $\tilde{K} = \begin{bmatrix} K & 0 & 0 & 0 \end{bmatrix}$, we also have $\hat{Z}^{N_0} = E X$ and $u = \tilde{K} X$.

\subsection{Main results}

\subsubsection{Stabilization in $L^2$ norm}

Our first main result is stated below.

\begin{theorem}\label{thm: main result 1}
Let $\theta_1,\theta_2\in[0,\pi/2]$, $p \in \mathcal{C}^1([0,1])$ with $p > 0$, $\tilde{q} \in \mathcal{C}^0([0,1])$, and $\ell\in(\R_{>0})^m$. Let $q \in \mathcal{C}^0([0,1])$ and $q_c \in\R$ be such that (\ref{eq: decomposition of the reaction term}) holds. Let $c \in L^2(0,1)$ and $b_k \in L^2(0,1)$ for $1 \leq k \leq m$. Consider the reaction-diffusion system described by (\ref{eq: distributed measurement - RD system}) with measured output (\ref{eq: bounded measurement operator}). Let $N_0 \geq 1$ and $\delta > 0$ be given such that $- \lambda_n + q_c < -\delta < 0$ for all $n \geq N_0 +1$. Assume that 1) for any $1 \leq n \leq N_0$, there exists $1 \leq k = k(n) \leq m$ such that $b_{n,k} \neq 0$; 2) $c_n \neq 0$ for all $1 \leq n \leq N_0$. Let $K \in\R^{m \times N_0}$ and $L \in\R^{N_0}$ be such that $A_0 + B_0 K$ and $A_0 - L C_0$ are Hurwitz with eigenvalues that have a real part strictly less than $-\delta < 0$. For a given $N \geq N_0 +1$, assume that there exist a symmetric positive definite $P \in \R^{2N \times 2N}$, $\alpha,\beta,\gamma,\mu,\kappa > 0$, a diagonal positive definite $T \in \R^{m \times m}$, and $C \in \R^{m \times N_0}$ such that 
\begin{equation}\label{eq: thm1 constraints}
\Theta_1(\kappa) \preceq 0, \quad \Theta_2 \succeq 0, \quad\Theta_3(\kappa) \leq 0
\end{equation}
where
\begin{align*}
\Theta_1(\kappa) & = \begin{bmatrix} \Theta_{1,1,1}(\kappa) & P \mathcal{L} & - E^\top C^\top T + P \mathcal{L}_\phi \\ \mathcal{L}^\top P & -\beta & 0 \\ - T C E + \mathcal{L_\phi}^\top P & 0 & \alpha\gamma \sum_{k=1}^m \Vert \mathcal{R}_N b_k \Vert_{L^2}^2 I - 2T \end{bmatrix} \\
\Theta_2 & = \begin{bmatrix} P & E^\top (K-C)^\top \\ (K-C)E & \mu\,\mathrm{diag}(\ell)^2 \end{bmatrix} , \\
\Theta_3(\kappa) & = 2\gamma \left\{ -\lambda_{N+1} + q_c + \kappa + \frac{1}{\alpha} \right\} + \beta \Vert \mathcal{R}_N c \Vert_{L^2}^2
\end{align*}
with $\Theta_{1,1,1}(\kappa) = F^\top P + P F + 2 \kappa P + \alpha \gamma \sum_{k=1}^m \Vert \mathcal{R}_N b_k \Vert_{L^2}^2 \tilde{K}^\top\tilde{K}$. 
Consider the block representation $P = (P_{i,j})_{1 \leq i,j \leq 4}$ with dimensions that are compatible with (\ref{eq: state-vector closed-loop truncated model}) and define
\begin{align}
\mathcal{E}_1 & = \bigg\{
z \in L^2(0,1) : \nonumber \\
& \hspace{0.8cm} (\pi_{N}z)^\top 
\begin{bmatrix}
P_{2,2} & P_{2,4} \\ P_{4,2} & P_{4,4} 
\end{bmatrix}
(\pi_{N}z) + \gamma \Vert \mathcal{R}_N z \Vert_{L^2}^2
\leq \frac{1}{\mu}
\bigg\} . \label{eq: ellipsoid thm1}
\end{align}
Then, considering the closed-loop system composed of the plant (\ref{eq: distributed measurement - RD system})  with measured output (\ref{eq: bounded measurement operator}) and the control law (\ref{eq: control law}), there exists $M > 0$ such that for any initial condition $z_0 \in \mathcal{E}_1$ and with a zero initial condition of the observer (i.e., $\hat{z}_n(0)=0$ for all $1 \leq n \leq N$), the system trajectory satisfies 
\begin{equation}\label{eq: thm1 exponential stability}
\Vert z(t,\cdot) \Vert_{L^2}^2 + \sum_{n=1}^{N} \hat{z}_n(t)^2 \leq M e^{-2 \kappa t} \Vert z_0 \Vert_{L^2}^2
\end{equation}
for all $t \geq 0$. Moreover, for any fixed $\kappa \in (0,\delta]$, the above constraints are always feasible for $N$ large enough.
\end{theorem}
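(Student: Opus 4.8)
The plan is to prove the decay estimate (\ref{eq: thm1 exponential stability}) by a Lyapunov argument on the functional
\[
V(t) = X(t)^\top P X(t) + \gamma \Vert \mathcal{R}_N z(t,\cdot) \Vert_{L^2}^2 ,
\]
and to settle the concluding feasibility claim by an explicit construction of the decision variables for $N$ large. First I would record that, because the observer is initialized at zero, $e_n(0)=z_n(0)$ and so $X(0) = \mathrm{col}(0,\pi_{N}z_0^{(\leq N_0)},0,\pi_{N}z_0^{(>N_0)})$ with $X(0)^\top P X(0) = (\pi_{N}z_0)^\top \begin{bmatrix} P_{2,2} & P_{2,4} \\ P_{4,2} & P_{4,4} \end{bmatrix}(\pi_{N}z_0)$; hence $V(0)\leq 1/\mu$ is exactly the membership $z_0\in\mathcal{E}_1$, i.e. $\mathcal{E}_1$ is the initial sublevel set $\{V\leq 1/\mu\}$. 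Global well-posedness (first for $z_0\in D(\mathcal{A})$, then by density) follows from standard semigroup theory, since $-\mathcal{A}+q_c\mathrm{Id}_{L^2}$ generates an analytic semigroup and $\mathrm{sat}_\ell$ is globally Lipschitz; this legitimizes the differentiation of $V$.

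The core computation is to differentiate $V$ along (\ref{eq: dynamics truncated model}) together with the residual dynamics $\dot z_n = (-\lambda_n+q_c)z_n + \sum_k b_{n,k}u_{\mathrm{sat},k}$, $n\geq N+1$. Writing $\phi = \phi_\ell(K\hat{Z}^{N_0})$ and $\zeta=\sum_{n\geq N+1}c_n z_n$, the $X$-part gives $X^\top(F^\top P+PF)X + 2X^\top P\mathcal{L}\zeta + 2X^\top P\mathcal{L}_\phi\phi$, and the tail part gives $2\gamma\sum_{n\geq N+1}(-\lambda_n+q_c)z_n^2 + 2\gamma\sum_k\langle\mathcal{R}_N z,\mathcal{R}_N b_k\rangle u_{\mathrm{sat},k}$. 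Bounding $-\lambda_n+q_c\leq -\lambda_{N+1}+q_c$ and applying a Young inequality with weight $\alpha$ to the cross terms (with $u_{\mathrm{sat}}=\tilde K X+\phi$) produces the $\frac1\alpha\Vert\mathcal{R}_N z\Vert_{L^2}^2$ contribution and the $\alpha\gamma\sum_k\Vert\mathcal{R}_N b_k\Vert_{L^2}^2$ terms that appear both in $\Theta_{1,1,1}$ (via $\tilde K^\top\tilde K$) and in the $\phi$-block (via $I$). Adding $2\kappa V$ (yielding the $2\kappa P$ and $2\gamma\kappa$ terms) and then the nonnegative quantity $-2\phi^\top T(\phi+CEX)\geq 0$ supplied by Lemma~\ref{eq: generalized sector condition}, valid as long as $\vert(K-C)EX\vert\leq\ell$, injects precisely the $-2T$, $-TCE$ and $-E^\top C^\top T$ entries of $\Theta_1$. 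The assembled identity leaves a leftover $+\beta\zeta^2$ (because $\Theta_1$ carries $-\beta$ in the $\zeta$-slot while the true derivative carries none), which I absorb through $\zeta^2=\langle\mathcal{R}_N z,\mathcal{R}_N c\rangle^2\leq\Vert\mathcal{R}_N c\Vert_{L^2}^2\Vert\mathcal{R}_N z\Vert_{L^2}^2$ into the $\beta\Vert\mathcal{R}_N c\Vert_{L^2}^2$ term of $\Theta_3$, giving
\[
\dot V + 2\kappa V \leq \eta^\top\Theta_1(\kappa)\eta + \Theta_3(\kappa)\Vert\mathcal{R}_N z\Vert_{L^2}^2 , \qquad \eta=\mathrm{col}(X,\zeta,\phi) ,
\]
so that $\Theta_1(\kappa)\preceq 0$ and $\Theta_3(\kappa)\leq 0$ force $\dot V+2\kappa V\leq 0$.

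To justify the sector step for all $t$ I would run an invariance/continuation argument: a Schur complement of $\Theta_2\succeq 0$ (with $P\succ 0$) gives $\vert(K-C)EX\vert\leq\ell$ on $\{X^\top PX\leq1/\mu\}\supseteq\{V\leq1/\mu\}$, since $V\geq X^\top PX$; as $V(0)\leq1/\mu$ and $\dot V\leq0$ wherever that bound holds, $\{V\leq1/\mu\}$ is positively invariant, so the sector inequality is valid along the whole trajectory and $V(t)\leq e^{-2\kappa t}V(0)$. Finally, using $z_n=\hat z_n+e_n$ together with $\Vert z\Vert_{L^2}^2=\Vert\pi_N z\Vert^2+\Vert\mathcal{R}_N z\Vert_{L^2}^2$, $\sum_{n=1}^N\hat z_n^2=\Vert\hat Z^{N_0}\Vert^2+\Vert\hat Z^{N-N_0}\Vert^2$, and the two-sided bound $\lambda_m(P)\Vert X\Vert^2+\gamma\Vert\mathcal{R}_N z\Vert_{L^2}^2\leq V\leq\lambda_M(P)\Vert X\Vert^2+\gamma\Vert\mathcal{R}_N z\Vert_{L^2}^2$, I bound the left-hand side of (\ref{eq: thm1 exponential stability}) by a constant times $V(t)$ and $V(0)$ by a constant times $\Vert z_0\Vert_{L^2}^2$, which yields (\ref{eq: thm1 exponential stability}) with an explicit $M$.

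For the feasibility claim with $\kappa\in(0,\delta]$ fixed, I would exhibit a feasible point as $N\to\infty$. With $K,L$ as in the hypotheses, the diagonal blocks $A_0+B_0K$, $A_0-LC_0$, $A_1$, $A_1$ of $F$ all have spectral abscissa $<-\delta\leq-\kappa$, so $F+\kappa I$ is Hurwitz and there is $P\succ0$ with $F^\top P+PF+2\kappa P\prec0$. Since $\lambda_{N+1}\to+\infty$ by (\ref{eq: estimation lambda_n}) while $\Vert\mathcal{R}_N b_k\Vert_{L^2}\to0$ and $\Vert\mathcal{R}_N c\Vert_{L^2}\to0$ (tails of convergent expansions), the perturbations carrying $\alpha\gamma\sum_k\Vert\mathcal{R}_N b_k\Vert_{L^2}^2$ vanish and $\Theta_3(\kappa)\to-\infty$, so $\Theta_1(\kappa)\preceq0$ and $\Theta_3(\kappa)\leq0$ hold for $N$ large, while $\Theta_2\succeq0$ is arranged by taking $C=K$ and $\mu$ large. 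I expect the main obstacle to be keeping this construction quantitatively controlled in $N$: one must verify that $P$ can be chosen with norm bounded uniformly in $N$ despite the couplings $LC_1$ and $B_1K$ in $F$, exploiting that the residual block $A_1$ becomes increasingly stable as $N$ grows so that these bounded-norm off-diagonal terms are dominated; ensuring the vanishing tails genuinely overcome the fixed stabilizing margin is the delicate point of the argument.
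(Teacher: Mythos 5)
Your Lyapunov half of the argument is correct and follows the paper's proof essentially step for step: same functional $V(X,z)=X^\top PX+\gamma\Vert\mathcal{R}_Nz\Vert_{L^2}^2$, same Young inequality producing the $\alpha\gamma\sum_k\Vert\mathcal{R}_Nb_k\Vert_{L^2}^2$ terms, same use of Lemma~\ref{eq: generalized sector condition} under $\vert(K-C)EX\vert\leq\ell$, same Schur complement of $\Theta_2$ and sublevel-set invariance, and the same classical-solutions-plus-density treatment of well-posedness. The gap is entirely in the feasibility claim, and it is twofold.

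First, the shortcut ``take $C=K$ and $\mu$ large'' is not merely unproven but structurally wrong. With $C=K$, the matrix $\Theta_2$ is block diagonal, so $\Theta_2\succeq 0$ holds for \emph{every} $\mu>0$; since $\mu$ appears in no other constraint of (\ref{eq: thm1 constraints}), feasibility of $\Theta_1(\kappa)\preceq 0$ and $\Theta_3(\kappa)\leq 0$ would yield the conclusion of the theorem for arbitrarily small $\mu$, i.e., global exponential stabilization. But when the plant is open-loop unstable (the case of interest, $-\lambda_1+q_c>0$), global stabilization by saturated inputs is impossible: the first mode obeys $\dot z_1=(-\lambda_1+q_c)z_1+\mathcal{L}_{b,1}u_{\mathrm{sat}}$ with $\Vert\mathcal{L}_{b,1}u_{\mathrm{sat}}\Vert$ uniformly bounded, so $z_1$ escapes for large $\vert z_1(0)\vert$. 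Hence with $C=K$ the remaining constraints are necessarily infeasible for every $N$ in that case. The concrete mechanism of failure is visible in $\Theta_1$: its off-diagonal block $-E^\top C^\top T+P\mathcal{L}_\phi=-E^\top K^\top T+P\mathcal{L}_\phi$ grows with $T$, so the Schur-complement correction behaves like $\tfrac{\tau}{2}E^\top K^\top KE$ as $T=\tau I\to\infty$ and cannot be made small. The paper avoids this by setting $C=0$: then the block is $P\mathcal{L}_\phi$, independent of $T$, so taking $\tau$ large annihilates the correction, while $\Theta_2\succeq 0$ (now a genuine constraint $P\succeq\tfrac1\mu E^\top K^\top\mathrm{diag}(\ell)^{-2}KE$) is enforced by the scaling $P=\eta Q$ with $\eta$ large at fixed $\mu$ --- consistent with a bounded region of attraction.

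Second, the point you explicitly defer --- a bound on $P$ uniform in $N$ despite the couplings $LC_1$ and $B_1K$ in $F$ --- is the load-bearing step of the feasibility proof, not a verification to be sketched. The paper settles it by applying the technical lemma of the Appendix (Lemma~\ref{lem: useful lemma}, borrowed from the earlier work on this control architecture) to $F+\kappa I$, which delivers $Q\succ0$ with $F^\top Q+QF+2\kappa Q=-I$ and $\Vert Q\Vert=O(1)$ as $N\to\infty$, exploiting exactly the structure you identify (uniformly exponentially stable diagonal blocks, $O(1)$ off-diagonal couplings); it then fixes the remaining scalings ($\beta=\eta N$, $\gamma=\eta/\sqrt N$, $T=\tau I$ with $\tau$ large). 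Without this lemma, or a proof of an equivalent uniform bound, your construction does not go through: the Lyapunov margin of a solution with $N$-dependent norm could be swamped by the fixed-size terms $\tfrac1\beta P\mathcal{L}\mathcal{L}^\top P$ and $P\mathcal{L}_\phi$ even though $\Vert\mathcal{R}_Nb_k\Vert_{L^2}$ and $\Vert\mathcal{R}_Nc\Vert_{L^2}$ vanish.
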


\begin{proof}
In order to use a Lyapunov-based argument, we start by considering classical solutions. The result for mild solutions will then be obtained by a density argument. Let $z_0 \in D(\mathcal{A})$ be an initial condition and let $(z,\hat{Z}^{N_0},\hat{Z}^{N-N_0}) \in \mathcal{C}^0(\R_{\geq 0} ; D(\mathcal{A}) \times \R^N ) \cap \mathcal{C}^1(\R_{\geq 0} ; L^2(0,1) \times \R^N )$ be the associated classical solution of the closed-loop system with zero initial conditions for the observer, whose existence is obtained from~\cite[Thm.~6.1.2, Thm.~6.1.6, Cor.~4.2.11]{pazy2012semigroups}. Define the Lyapunov function candidate $V(X,z) = X^\top P X + \gamma \sum_{n \geq N+1} \left< z , \phi_n \right>^2$ for $X\in\R^{2N}$ and $z \in L^2(0,1)$ (see e.g. \cite{coron2004global}). The computation of the time derivative of $V$ along the system trajectories (\ref{eq: distributed measurement - projection}) and (\ref{eq: dynamics truncated model}) gives
\begin{align*}
& \dot{V} + 2 \kappa V = X^\top \left( F^\top P + P F + 2 \kappa P \right) X + 2 X^\top P \mathcal{L} \zeta \\
& + 2 X^\top P \mathcal{L}_\phi \phi_\ell(K\hat{Z}^{N_0}) + 2 \gamma \sum_{n \geq N+1} (-\lambda_n + q_c + \kappa) z_n^2 \\
& + 2 \gamma \sum_{n \geq N+1} z_n \mathcal{L}_{b,n} \tilde{K}X + 2 \gamma \sum_{n \geq N+1} z_n \mathcal{L}_{b,n} \phi_\ell(K\hat{Z}^{N_0}) 
\end{align*}
where $\mathcal{L}_{b,n} = \begin{bmatrix} b_{n,1} & \ldots & b_{n,m} \end{bmatrix}$. Using Young's inequality, we have for any $\alpha > 0$ and any $w \in \R^m$ that
\begin{align*}
2 \sum_{n \geq N+1} z_n \mathcal{L}_{b,n} w
& \leq \dfrac{1}{\alpha} \sum_{n \geq N+1} z_n^2 + \alpha \sum_{k=1}^m \Vert \mathcal{R}_N b_k \Vert_{L^2}^2 \Vert w \Vert^2 .
\end{align*}
Hence, defining $\tilde{X} = \mathrm{col} (X , \zeta , \phi_\ell(K\hat{Z}^{N_0}) )$, we infer that
\begin{align*}
& \dot{V} + 2 \kappa V \leq \tilde{X}^\top \begin{bmatrix} \Theta_{1,1,1}(\kappa) & P\mathcal{L} & P\mathcal{L}_\phi \\ \mathcal{L}^\top P & 0 & 0 \\ \mathcal{L}_\phi^\top P & 0 & \alpha\gamma\sum_{k=1}^m \Vert \mathcal{R}_N b_k \Vert_{L^2}^2 I \end{bmatrix} \tilde{X} \\
& \phantom{\dot{V} + 2 \kappa V \leq}\, + 2 \gamma \sum_{n \geq N+1} \left( -\lambda_n + q_c + \kappa + \dfrac{1}{\alpha} \right) z_n^2 .
\end{align*}
Recalling that $\zeta = \sum_{n \geq N+1} c_n z_n$, we obtain that $\zeta^2 \leq \Vert \mathcal{R}_N c \Vert_{L^2}^2 \sum_{n \geq N+1} z_n^2$. Hence, for any $\beta > 0$, $\beta \Vert \mathcal{R}_N c \Vert_{L^2}^2 \sum_{n \geq N+1} z_n^2 - \beta \zeta^2 \geq 0$. Assuming that $\hat{Z}^{N_0} \in \R^{N_0}$ satisfies $\vert (K-C)\hat{Z}^{N_0} \vert \leq \ell$, we also have from Lemma~\ref{eq: generalized sector condition} that 
\begin{equation*}
\phi_\ell(K\hat{Z}^{N_0})^\top T ( \phi_\ell(K\hat{Z}^{N_0}) + C\hat{Z}^{N_0} ) \leq 0 .
\end{equation*}
Combining the three latter inequalities, defining $\Gamma_n = 2 \gamma \left( -\lambda_{n} + q_c + \kappa + \frac{1}{\alpha} \right) + \beta \Vert \mathcal{R}_N c \Vert_{L^2}^2$, and recalling that $\hat{Z}^{N_0} = E X$, we obtain that 
\begin{align*}
& \dot{V} + 2 \kappa V
\leq \tilde{X}^\top \Theta_1(\kappa) \tilde{X} 
+ \sum_{n \geq N+1} \Gamma_{n} z_n^2 
\end{align*}
for all $X \in \R^{2N}$ satisfying $\vert (K-C)EX \vert \leq \ell$. Since $\Gamma_n \leq \Theta_3(\kappa) \leq 0$ for all $n \geq N+1$ and $\Theta_1(\kappa) \preceq 0$, we infer that $\dot{V} + 2 \kappa V \leq 0$ holds as soon as $X \in \R^{2N}$ is such that $\vert (K-C)EX \vert \leq \ell$. 

Consider $X\in\R^{2N}$ and $z \in L^2(0,1)$ such that $V(X,z) \leq 1/\mu$. By Schur complement, $\Theta_2 \succeq 0$ implies that $P \succeq \frac{1}{\mu} E^\top (K-C)^\top \mathrm{diag}(\ell)^{-2} (K-C)E$. Therefore we have $\Vert \mathrm{diag}(\ell)^{-1} (K-C)EX \Vert \leq 1$. This in particular implies that $\vert (K-C)EX \vert \leq \ell$ hence, in the context of the previous paragraph, $\dot{V} + 2 \kappa V \leq 0$. 

Assume now that the initial condition is selected such that $z_0 \in D(\mathcal{A}) \cap \mathcal{E}_1$. Since the initial condition of the observer is zero, this implies that $X(0) = \mathrm{col}(0,\pi_{N_0}z_0,0,\pi_{N_0,N}z_0)$ and $V(X(0),z_0) \leq 1/\mu$. Assuming that $z_0 \neq 0$ (otherwise the system trajectory is identically zero), we obtain that $\dot{V}(X(0),z_0) \leq - 2\kappa V(X(0),z_0) < 0$. A simple contradiction argument shows that $V(X(t),z(t,\cdot)) \leq 1/\mu$ hence $\dot{V}(X(t),z(t,\cdot)) + 2\kappa V(X(t),z(t,\cdot)) \leq 0$ for all $t \geq 0$. We deduce that $V(X(t),z(t,\cdot)) \leq e^{-2\kappa t} V(X(0),z_0)$ for all $t \geq 0$ and all $z_0 \in D(\mathcal{A}) \cap \mathcal{E}_1$. The claimed estimate now easily follows for classical solutions from the definition of $V$. The result for mild solutions associated with any $z_0 \in \mathcal{E}_1$ follows from a classical density argument~\cite[Thm.~6.1.2]{pazy2012semigroups}.

It remains to show that for any given $\kappa \in (0,\delta]$, the constraints are always feasible for $N$ sufficiently large. We first set $C=0$. Now, we note that the gains $K$ and $L$ are independent of $N$ while $\Vert C_1 \Vert = O(1)$ and $\Vert B_1 \Vert = O(1)$ because $c,b_{k} \in L^2(0,1)$. The matrices $A_0+B_0 K + \kappa I$ and $A_0 - L C_0 + \kappa I$ are Hurwitz. Finally $\Vert \exp((A_1+\kappa I) t) \Vert \leq e^{-\kappa_0 t}$ for all $t \geq 0$ and all $N \geq N_0 +1$ with $\kappa_0 = \lambda_{N_0+1}-q_c-\kappa \geq \lambda_{N_0+1}-q_c-\delta > 0$. Hence, the application of the Lemma reported in Appendix to the matrix $F + \kappa I$ ensures the existence of $Q \succ 0$ such that $F^\top Q + Q F + 2 \kappa Q = - I$ with $\Vert Q \Vert = O(1)$ as $N \rightarrow + \infty$. We now define $P = \eta Q$ for some $\eta > 0$ to be defined and the matrix
\begin{align*}
\Theta_{1p} 
& = \begin{bmatrix} \Theta_{1,1,1}(\kappa) & P \mathcal{L} \\ \mathcal{L}^\top P & - \beta \end{bmatrix} \\
& = \begin{bmatrix} -\eta I + \alpha \gamma \sum_{k=1}^m \Vert \mathcal{R}_N b_k \Vert_{L^2}^2 \tilde{K}^\top\tilde{K} & \eta Q \mathcal{L} \\ \eta \mathcal{L}^\top Q & - \beta \end{bmatrix} .
\end{align*}
Invoking Schur complement, $\Theta_{1p} \prec 0$ if and only if $-\eta I + \alpha \gamma \sum_{k=1}^m \Vert \mathcal{R}_N b_k \Vert_{L^2}^2 \tilde{K}^\top\tilde{K} + \frac{\eta^2}{\beta} Q\mathcal{L}\mathcal{L}^\top Q \prec 0$. Fixing an arbitrary value for $\alpha > 0$ while setting $\beta = \eta N$ and $\gamma = \eta/\sqrt{N}$, we obtain that $\Theta_{1p} \prec 0$ and $\Theta_{3}(\kappa) \leq 0$ if and only if
\begin{align*}
& - I + \frac{\alpha}{\sqrt{N}} \sum_{k=1}^m \Vert \mathcal{R}_N b_k \Vert_{L^2}^2 \tilde{K}^\top\tilde{K} + \frac{1}{N} Q\mathcal{L}\mathcal{L}^\top Q \prec 0 , \\
& - \lambda_{N+1} + q_c + \kappa + \frac{1}{\alpha} + N \sqrt{N} \frac{\Vert \mathcal{R}_N c \Vert_{L^2}^2}{2} \leq 0 .
\end{align*}  
Based on (\ref{eq: estimation lambda_n}) and noting that $\Vert \mathcal{L}\Vert = \sqrt{2} \Vert L \Vert$ and $\Vert \tilde{K} \Vert = \Vert K \Vert$ while recalling that $\Vert Q \Vert = O(1)$, we obtain the existence of a sufficiently large $N$, selected independently of $\eta,\mu > 0$ and of the matrix $T$, such that $\Theta_{1p} \prec 0$ and $\Theta_3(\kappa) \leq 0$. This fixes the dimension $N \geq N_0 + 1$ as well as the matrix $Q$. We now consider the constraint $\Theta_2 \succeq 0$, which is equivalent to $\eta Q - \frac{1}{\mu} E^\top K^\top \mathrm{diag}(\ell)^{-2} K E \succeq 0$ by Schur complement. We fix an arbitrary value of $\mu > 0$. Since $Q \succ 0$,we fix $\eta > 0$ large enough such that we indeed have $\Theta_2 \succeq 0$. This definitely fixes the decision variables $P = \eta Q \succ 0$, $\beta = \eta N > 0$, and $\gamma = \eta/\sqrt{N} > 0$. To conclude, it remains to tune the diagonal positive definite matrix $T \in \R^{m \times m}$ in order to ensure that $\Theta_1(\kappa) \preceq 0$. Imposing $T = \tau I$ for $\tau > \frac{\alpha\gamma}{2}\sum_{k=1}^m \Vert \mathcal{R}_N b_k \Vert_{L^2}^2$, Schur complement shows that $\Theta_1(\kappa) \preceq 0$ if and only if
\begin{equation*}
\Theta_{1p} + \frac{1}{2\tau - \alpha\gamma \sum_{k=1}^m \Vert \mathcal{R}_N b_k \Vert_{L^2}^2} 
\begin{bmatrix} P \mathcal{L}_\phi \\ 0 \end{bmatrix}
\begin{bmatrix} P \mathcal{L}_\phi \\ 0 \end{bmatrix}^\top
\preceq 0 .
\end{equation*}
Since $\Theta_{1p} \prec 0$ is independent of $\tau$, we obtain that the latter inequality is satisfied for $\tau >0$ large enough. Hence, we have achieved an adjustment of the different degrees of freedom such that $\Theta_1(\kappa) \preceq 0$, $\Theta_2 \succeq 0$, $\Theta_3(\kappa) \leq 0$. This completes the proof.
\end{proof}

\begin{remark}
In~\cite{mironchenko2020local} the control law takes the form of the state-feedback $u = K \pi_{N_0} z(t,\cdot)$. In the presence of saturations, the region of attraction derived therein takes the form of $\mathcal{E} = \{ z \in L^2(0,1) \,:\, (\pi_{N_0}z)^\top P (\pi_{N_0}z) \leq 1 \}$ where $P \succ 0$ satisfies suitable LMI conditions. Hence the presence of a saturation mechanism only imposes constraints on the $N_0$ first modes of the reaction-diffusion plant. In sharp contrast, the presence of a saturation mechanism for the output feedback controller (\ref{eq: control law}) imposes constraints on all the modes of the PDE as seen from (\ref{eq: ellipsoid thm1}).
\end{remark}

\begin{remark}\label{rmk: thm1 - rmk0}
For any given $\kappa > 0$, the exponential stability estimate (\ref{eq: thm1 exponential stability}) with decay rate $\kappa$ can always be achieved for initial conditions in a neighborhood of the origin of the form (\ref{eq: ellipsoid thm1}) by setting $\delta = \kappa$ and by selecting the integer $N$ large enough.
\end{remark}

\begin{remark}\label{rmk: thm1 - rmk1}
Considering a given $N \geq N_0 + 1$ and $0 < \kappa \leq \overline{\kappa}$, it is easy to see that $\Theta_1(\kappa) \preceq \Theta_1(\overline{\kappa})$ and $\Theta_3(\kappa) \leq \Theta_3(\overline{\kappa})$. Therefore, the feasibility of the constraints of Theorem~\ref{thm: main result 1} with decay rate $\overline{\kappa} > 0$ implies the feasibility of the constraints with the same value of the decision variables $P,\alpha,\beta,\gamma,\mu,T,C$ for all $\kappa \in (0,\overline{\kappa}]$.
\end{remark}

\begin{remark}\label{rmk: thm1 - rmk2}
In the context of Theorem~\ref{thm: main result 1}, let $N \geq N_0 + 1$, a symmetric positive definite $P \in \R^{2N \times 2N}$, $\alpha,\beta,\gamma,\mu > 0$, a diagonal positive definite $T \in \R^{m \times m}$, and $C \in \R^{m \times N_0}$ such that\footnote{Existence is guaranteed by the last part of the proof of Theorem~\ref{thm: main result 1} which remains valid in the case $\kappa = 0$.} 
\begin{equation}\label{eq: thm1 constraints - strict version}
\Theta_1(0) \prec 0, \quad \Theta_2 \succeq 0, \quad \Theta_3(0) < 0 .
\end{equation}
Then a continuity argument at $\kappa = 0$ shows the existence of $\kappa > 0$ such that $\Theta_1(\kappa) \preceq 0$, $\Theta_2 \succeq 0$, $\Theta_3(\kappa) \leq 0$, allowing the application of the conclusions of Theorem~\ref{thm: main result 1}.
\end{remark}

\begin{remark}\label{rmk: thm1 - rmk3}
Even if stated for a zero initial condition $\hat{z}_0 = 0$ of the observer, the conclusions of Theorem~\ref{thm: main result 1} can actually be extended in a straightforward manner to non zero initial conditions $\hat{z}_0 \in\R^N$. In that case, following the proof of Theorem~\ref{thm: main result 1}, we obtain the exponential decay to the origin of the system trajectories as soon as the initial conditions satisfy $V(X(0),z_0) \leq 1/\mu$ with $X(0) = \mathrm{col}(\hat{z}_{0,1},\pi_{N_0}z_0-\hat{z}_{0,1},\hat{z}_{0,2},\pi_{N_0,N}z_0-\hat{z}_{0,2})$ where $\hat{z}_0 = \mathrm{col}(\hat{z}_{0,1},\hat{z}_{0,2})$, $\hat{z}_{0,1} \in \R^{N_0}$, and $\hat{z}_{0,2} \in \R^{N-N_0}$.
\end{remark}

\begin{remark}
The quantity $\Vert \mathcal{R}_N z \Vert_{L^2}^2$ appearing in the definition of the ellipsoid $\mathcal{E}_1$ defined by (\ref{eq: ellipsoid thm1}) can be easily computed in practice by noting that $\Vert \mathcal{R}_N z \Vert_{L^2}^2 = \Vert z \Vert_{L^2}^2 - \Vert \pi_N z \Vert^2$. 
\end{remark}

\subsubsection{Stabilization in $H^1$ norm}

The following result deals with the exponential stability of the system trajectories evaluated in $H^1$ norm.

\begin{theorem}\label{thm: main result 2}
In the context of Theorem~\ref{thm: main result 1}, we further constrain $q \in \mathcal{C}^0([0,1])$ and $q_c \in\R$ such that (\ref{eq: decomposition of the reaction term}) holds with the additional constraint $q > 0$. For a given $N \geq N_0 +1$, assume that there exist a symmetric positive definite $P \in \R^{2N \times 2N}$, $\alpha > 1$, $\beta,\gamma,\mu,\kappa > 0$, a diagonal positive definite $T \in \R^{m \times m}$, and $C \in \R^{m \times N_0}$ such that 
\begin{equation}\label{eq: thm2 constraints}
\Theta_1(\kappa) \preceq 0, \quad \Theta_2 \succeq 0, \quad\Theta_3(\kappa) \leq 0
\end{equation}
where $\Theta_1(\kappa)$ and $\Theta_2$ are defined as in Theorem~\ref{thm: main result 1} while 
\begin{align*}
\Theta_3(\kappa) & = 2\gamma \left\{ - \left( 1 - \frac{1}{\alpha} \right) \lambda_{N+1} + q_c + \kappa \right\} + \frac{\beta \Vert \mathcal{R}_N c \Vert_{L^2}^2}{\lambda_{N+1}} .
\end{align*}
Consider the block representation $P = (P_{i,j})_{1 \leq i,j \leq 4}$ with dimensions that are compatible with (\ref{eq: state-vector closed-loop truncated model}) and define
\begin{align}
\mathcal{E}_2 & = \bigg\{
z \in D(\mathcal{A}) : (\pi_{N}z)^\top 
\begin{bmatrix}
P_{2,2} & P_{2,4} \\ P_{4,2} & P_{4,4} 
\end{bmatrix}
(\pi_{N}z) \nonumber \\
& \hspace{2.4cm} + \gamma \Vert \mathcal{R}_N \mathcal{A}^{1/2}z \Vert_{L^2}^2
\leq \frac{1}{\mu}
\bigg\} . \label{eq: ellipsoid thm2}
\end{align}
Then, considering the closed-loop system composed of the plant (\ref{eq: distributed measurement - RD system}) with measured output (\ref{eq: bounded measurement operator}) and the control law (\ref{eq: control law}), there exists $M > 0$ such that for any initial condition $z_0 \in \mathcal{E}_2$ and with a zero initial condition of the observer (i.e., $\hat{z}_n(0)=0$ for all $1 \leq n \leq N$), the system trajectory satisfies $\Vert z(t,\cdot) \Vert_{H^1}^2 + \sum_{n=1}^{N} \hat{z}_n(t)^2 \leq M e^{-2 \kappa t} \Vert z_0 \Vert_{H^1}^2$ for all $t \geq 0$. Moreover, for any fixed $\kappa \in (0,\delta]$, the above constraints are always feasible for $N$ large enough.
\end{theorem}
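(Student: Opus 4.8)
The plan is to rerun the proof of Theorem~\ref{thm: main result 1} verbatim in its architecture, changing only the Lyapunov functional to an $H^1$-type one,
\[
V(X,z) = X^\top P X + \gamma \sum_{n \geq N+1} \lambda_n \left< z , \phi_n \right>^2 = X^\top P X + \gamma \Vert \mathcal{R}_N \mathcal{A}^{1/2} z \Vert_{L^2}^2 ,
\]
which is exactly the quantity appearing in the definition (\ref{eq: ellipsoid thm2}) of $\mathcal{E}_2$. Working first with classical solutions (now directly available for every initial condition since $\mathcal{E}_2 \subset D(\mathcal{A})$), I would differentiate $V$ along (\ref{eq: distributed measurement - projection}) and (\ref{eq: dynamics truncated model}). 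The only structural novelty relative to Theorem~\ref{thm: main result 1} is the weight $\lambda_n$ in the residual sum: it yields the diagonal tail contribution $2\gamma \sum_{n \geq N+1} \lambda_n(-\lambda_n + q_c + \kappa) z_n^2$ and the two cross terms $2\gamma \sum_{n \geq N+1} \lambda_n z_n \mathcal{L}_{b,n} w$ with $w \in \{ \tilde{K}X , \phi_\ell(K\hat{Z}^{N_0}) \}$, while the contributions $2X^\top P \mathcal{L} \zeta$ and $2 X^\top P \mathcal{L}_\phi \phi_\ell(K\hat{Z}^{N_0})$ are unchanged and feed the same off-diagonal blocks of $\Theta_1(\kappa)$.

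Two estimates differ from Theorem~\ref{thm: main result 1}. First, the cross terms are handled with the \emph{weighted} Young inequality $2 \lambda_n z_n \mathcal{L}_{b,n} w \leq \frac{1}{\alpha} \lambda_n^2 z_n^2 + \alpha (\mathcal{L}_{b,n}w)^2$; since $\sum_{n \geq N+1}(\mathcal{L}_{b,n}w)^2 \leq \Vert w \Vert^2 \sum_{k=1}^m \Vert \mathcal{R}_N b_k \Vert_{L^2}^2$ exactly as before, the matrix $\Theta_1(\kappa)$ is unaffected, while the self term now reads $\frac{2\gamma}{\alpha} \sum_{n \geq N+1} \lambda_n^2 z_n^2$. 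Second, since $q > 0$ guarantees $\lambda_n \geq \lambda_{N+1} > 0$ for $n \geq N+1$, Cauchy--Schwarz gives $\zeta^2 = \big( \sum_{n \geq N+1} c_n z_n \big)^2 \leq \big( \sum_{n \geq N+1} \frac{c_n^2}{\lambda_n} \big)\big( \sum_{n \geq N+1} \lambda_n z_n^2 \big) \leq \frac{\Vert \mathcal{R}_N c \Vert_{L^2}^2}{\lambda_{N+1}} \sum_{n \geq N+1} \lambda_n z_n^2$, which produces the term $\frac{\beta \Vert \mathcal{R}_N c \Vert_{L^2}^2}{\lambda_{N+1}}$ of the modified $\Theta_3(\kappa)$. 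Collecting the diagonal, weighted-Young, and $\beta$-penalty contributions, the coefficient of $z_n^2$ in the residual sum factorizes as $\Gamma_n = \lambda_n \big[ 2\gamma(-(1-\frac{1}{\alpha})\lambda_n + q_c + \kappa) + \frac{\beta \Vert \mathcal{R}_N c \Vert_{L^2}^2}{\lambda_{N+1}} \big]$.

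This factorization is the crux of the $H^1$ case and the reason the hypothesis $\alpha > 1$ is imposed: because $1 - \frac{1}{\alpha} > 0$, the bracket in $\Gamma_n$ is nonincreasing in $\lambda_n$, hence for $n \geq N+1$ it is dominated by its value at $n = N+1$, which is precisely $\Theta_3(\kappa) \leq 0$; combined with $\lambda_n > 0$ this gives $\Gamma_n \leq 0$. This is the main obstacle, since one must carry the $\lambda_n$ weights through Young's inequality in a way that keeps the high-frequency modes damped; had we allowed $\alpha \leq 1$ the destabilizing term $\frac{1}{\alpha}\lambda_n^2$ would overwhelm the diffusion term $-\lambda_n^2$. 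With $\Gamma_n \leq 0$ in hand the rest is verbatim: $\dot{V} + 2\kappa V \leq \tilde{X}^\top \Theta_1(\kappa) \tilde{X} + \sum_{n \geq N+1} \Gamma_n z_n^2 \leq 0$ on the region $\vert (K-C)EX \vert \leq \ell$, the Schur-complement consequence of $\Theta_2 \succeq 0$ confines the trajectory to this region on $\{ V \leq 1/\mu \}$, and the contradiction/invariance argument yields $V(X(t),z(t,\cdot)) \leq e^{-2\kappa t} V(X(0),z_0)$. With zero observer initialization, $X(0) = \mathrm{col}(0,\pi_{N_0}z_0,0,\pi_{N_0,N}z_0)$, so $z_0 \in \mathcal{E}_2$ is equivalent to $V(X(0),z_0) \leq 1/\mu$.

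Finally I would convert the decay of $V$ into the claimed $H^1$ estimate using (\ref{eq: inner product Af and f}), which under $q > 0$ makes $\sum_{n \geq 1}\lambda_n z_n^2$ equivalent to $\Vert z \Vert_{H^1}^2$: the head $\sum_{n=1}^N \lambda_n z_n^2 \leq \lambda_N \Vert \pi_N z \Vert^2$ and the tail $\gamma \sum_{n \geq N+1}\lambda_n z_n^2$ are each bounded by a constant multiple of $V$ (using $\Vert \pi_N z \Vert^2 \leq 2\Vert X \Vert^2 \leq \frac{2}{\lambda_m(P)}V$), while $V(X(0),z_0)$ is bounded by a constant multiple of $\Vert z_0 \Vert_{H^1}^2$, which supplies $M$. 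The feasibility claim for fixed $\kappa \in (0,\delta]$ follows the construction in the proof of Theorem~\ref{thm: main result 1} with $C = 0$, $\beta = \eta N$, $\gamma = \eta/\sqrt{N}$, and any fixed $\alpha > 1$: the treatments of $\Theta_1(\kappa)$ and $\Theta_2$ are identical, and for the modified $\Theta_3(\kappa)$ the dominant term $-2\gamma(1-\frac{1}{\alpha})\lambda_{N+1}$ behaves like $-N^{3/2}$ (from $\lambda_{N+1} = O(N^2)$ in (\ref{eq: estimation lambda_n})) while the $\beta$-term $\frac{\beta \Vert \mathcal{R}_N c \Vert_{L^2}^2}{\lambda_{N+1}} = O(\Vert \mathcal{R}_N c \Vert_{L^2}^2/N) \to 0$, so $\Theta_3(\kappa) \to -\infty$ as $N \to +\infty$.
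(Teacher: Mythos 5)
Your proof is correct and follows essentially the same route as the paper's: the same $H^1$-weighted Lyapunov functional $V(X,z)=X^\top P X+\gamma\sum_{n\geq N+1}\lambda_n\langle z,\phi_n\rangle^2$, the same weighted Young inequality producing the $\frac{1}{\alpha}\lambda_n^2 z_n^2$ term, the same generalized sector condition and invariance argument, and the same feasibility construction. The only (immaterial) difference is that you bound $\zeta^2\leq\frac{\Vert\mathcal{R}_N c\Vert_{L^2}^2}{\lambda_{N+1}}\sum_{n\geq N+1}\lambda_n z_n^2$ by a weighted Cauchy--Schwarz inequality, whereas the paper keeps the unweighted bound $\zeta^2\leq\Vert\mathcal{R}_N c\Vert_{L^2}^2\sum_{n\geq N+1}z_n^2$ and absorbs the resulting $\beta\Vert\mathcal{R}_N c\Vert_{L^2}^2/\lambda_n$ into its $\Gamma_n$; both per-mode coefficients are monotone thanks to $\alpha>1$ and collapse to the same $\Theta_3(\kappa)$ at $n=N+1$.
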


\begin{proof}
Consider the Lyapunov functional candidate $V(X,z) = X^\top P X + \gamma \sum_{n \geq N+1} \lambda_n \left< z , \phi_n \right>^2$ defined for $X\in\R^{2N}$ and $z \in D(\mathcal{A})$ (see e.g. \cite{coron2004global}). The computation of the time derivative of $V$ along the system trajectories (\ref{eq: distributed measurement - projection}) and (\ref{eq: dynamics truncated model}) gives
\begin{align*}
& \dot{V} + 2 \kappa V = X^\top \left( F^\top P + P F + 2 \kappa P \right) X + 2 X^\top P \mathcal{L} \zeta \\
& + 2 X^\top P \mathcal{L}_\phi \phi_\ell(K\hat{Z}^{N_0}) + 2 \gamma \sum_{n \geq N+1} \lambda_n (-\lambda_n + q_c + \kappa) z_n^2 \\
& + 2 \gamma \sum_{n \geq N+1} \lambda_n z_n \mathcal{L}_{b,n} \tilde{K}X + 2 \gamma \sum_{n \geq N+1} \lambda_n z_n \mathcal{L}_{b,n} \phi_\ell(K\hat{Z}^{N_0}) 
\end{align*}
where $\mathcal{L}_{b,n} = \begin{bmatrix} b_{n,1} & \ldots & b_{n,m} \end{bmatrix}$. Using Young's inequality, we have for any $\alpha > 0$ and any $w \in \R^m$ that
\begin{align*}
2 \sum_{n \geq N+1} \lambda_n z_n \mathcal{L}_{b,n} w
& \leq \dfrac{1}{\alpha} \sum_{n \geq N+1} \lambda_n^2 z_n^2 + \alpha \sum_{k=1}^m \Vert \mathcal{R}_N b_k \Vert_{L^2}^2 \Vert w \Vert^2 .
\end{align*}
Introducing the vector $\tilde{X} = \mathrm{col} (X , \zeta , \phi_\ell(K\hat{Z}^{N_0}) )$ and the quantity $\Gamma_n = 2\gamma \left\{ - \left( 1 - \frac{1}{\alpha} \right) \lambda_{n} + q_c + \kappa \right\} + \frac{\beta \Vert \mathcal{R}_N c \Vert_{L^2}^2}{\lambda_{n}}$ for $n \geq N+1$, we infer similarly to the proof of Theorem~\ref{thm: main result 1} that 
\begin{align*}
& \dot{V} + 2 \kappa V
\leq \tilde{X}^\top \Theta_1(\kappa) \tilde{X} 
+ \sum_{n \geq N+1} \lambda_n \Gamma_{n} z_n^2 
\end{align*}
for all $X \in \R^{2N}$ satisfying $\vert (K-C)EX \vert \leq \ell$. Since $\alpha > 1$, we infer that $\Gamma_n \leq \Theta_3(\kappa) \leq 0$ for all $n \geq N+1$. Combining this with $\Theta_1(\kappa) \preceq 0$, we obtain that $\dot{V} + 2 \kappa V \leq 0$ for all $X \in \R^{2N}$ such that $\vert (K-C)EX \vert \leq \ell$. Following now similar arguments that the ones employed in the proof of Theorem~\ref{thm: main result 1}, we obtain that  $V(X(t),z(t,\cdot)) \leq e^{-2\kappa t} V(X(0),z_0)$ for all $t \geq 0$ and all $z_0 \in \mathcal{E}_2$ while considering zero initial conditions for the observer dynamics. The claimed estimate now follows from the definition of $V$ and (\ref{eq: inner product Af and f}). The feasibility of the constraints for $N$ large enough follows the same arguments that the ones reported in the proof of Theorem~\ref{thm: main result 1}. 
\end{proof}

\begin{remark}
Note that similar remarks to the ones stated in Remarks~\ref{rmk: thm1 - rmk0}, \ref{rmk: thm1 - rmk1}, \ref{rmk: thm1 - rmk2} and~\ref{rmk: thm1 - rmk3} also apply to Theorem~\ref{thm: main result 2}. Moreover, the quantity $\Vert \mathcal{R}_N \mathcal{A}^{1/2} z \Vert_{L^2}^2$ appearing in the definition (\ref{eq: ellipsoid thm2}) of the ellipsoid $\mathcal{E}_2$ can be easily computed in practice by noting that $\Vert \mathcal{R}_N \mathcal{A}^{1/2} z \Vert_{L^2}^2 = \left< \mathcal{A}z , z \right> - \sum_{n=1}^{N} \lambda_n \left< z , \phi_n \right>^2$ while an integration by parts shows that $\left< \mathcal{A}z , z \right> = p(0)z(0)z'(0) - p(1)z(1)z'(1) + \int_0^1 p (z')^2 + q z^2 \,\mathrm{d}x$. 
\end{remark}

\subsection{Numerical considerations}

\subsubsection{Derivation of LMI conditions}\label{subsubsec: Derivation of LMI conditions}

For a given decay rate $\kappa \in (0,\delta]$ and a given number of observed modes $N \geq N_0 +1$, the constraints of either Theorem~\ref{thm: main result 1} or Theorem~\ref{thm: main result 2} are nonlinear functions of the decision variables $P,\alpha,\beta,\gamma,\mu,T,C$. We propose here to reformulate these constraints in order to obtain LMIs, allowing the use of efficient numerical tools for a fixed value of $N$. 

First, one can arbitrarily fix the value of $\alpha > 0$ in the case of Theorem~\ref{thm: main result 1} and $\alpha > 1$ in the case of Theorem~\ref{thm: main result 2}. Even if this removes one degree of freedom, the constraints of either Theorem~\ref{thm: main result 1} or Theorem~\ref{thm: main result 2} still remain feasible for a sufficiently large $N$ as shown in the associated proofs. By doing so, the constraints take the form of BMIs of the decision variables $P,\beta,\gamma,\mu,T,C$.

Second, in order to obtain LMIs, we further constrain the decision variables by imposing $T = \tau T_0$ for some given diagonal positive definite matrix $T_0 \in \R^{m \times m}$ while $\tau > 0$ stands for the new decision variable. Again, by a slight adaptation of the proof of Theorem~\ref{thm: main result 1}, the resulting constraints are still feasible for $N$ large enough. Introducing the change of variable $\tilde{C} = \tau C$, we infer that
\begin{align*}
& \Theta_1(\kappa) = \\
& \begin{bmatrix} \Theta_{1,1,1}(\kappa) & P \mathcal{L} & - E^\top \tilde{C}^\top T_0 + P \mathcal{L}_\phi \\ \mathcal{L}^\top P & -\beta & 0 \\ - T_0 \tilde{C} E + \mathcal{L}_\phi^\top P & 0 & \alpha\gamma \sum_{k=1}^m \Vert \mathcal{R}_N b_k \Vert_{L^2}^2 I - 2 \tau T_0 \end{bmatrix} .
\end{align*}
Moreover, defining $\tilde{\mu} = \tau^2 \mu$ and
\begin{align*}
\tilde{\Theta}_2 =
\begin{bmatrix}
I & 0 \\ 0 & \tau I
\end{bmatrix}^\top 
\Theta_2
\begin{bmatrix}
I & 0 \\ 0 & \tau I
\end{bmatrix}
= \begin{bmatrix} P & E^\top (\tau K - \tilde{C})^\top \\ (\tau K - \tilde{C})E & \tilde{\mu} \,\mathrm{diag}(\ell)^2 \end{bmatrix} 
\end{align*}
we have $\Theta_2 \succeq 0$ if and only if $\tilde{\Theta}_2 \succeq 0$. Hence, with fixed $\alpha,T_0$, the constraints reduce to the LMIs $\Theta_1(\kappa) \preceq 0$, $\tilde{\Theta}_2 \succeq 0$, $\Theta_3(\kappa) \leq 0$ with decision variables $P,\beta,\gamma,\tilde{\mu},\tau,\tilde{C}$. If feasibles, one can recover the original variables by setting $C = \tilde{C}/\tau$, $T = \tau T_0$, and $\mu = \tilde{\mu} / \tau^2$.

\begin{remark}
In the case of a single input channel ($m=1$), the diagonal positive definite matrix $T$ reduces to a positive constant. Hence the application of the above procedure only imposes to fix $\alpha > 0$ in the case of Theorem~\ref{thm: main result 1} and $\alpha > 1$ in the case of Theorem~\ref{thm: main result 2} in order to obtain the constraints in the form of LMIs.
\end{remark}

\subsubsection{Estimation of the domain of attraction}\label{subsubsec: shaping domain of attraction}

In the context of Theorem~\ref{thm: main result 1}, let an integer $N \geq N_0 + 1$ and a $\kappa \in (0,\delta]$ such that the associated constraints (\ref{eq: thm1 constraints}) are feasible. Consider a given symmetric positive definite matrix $R \in \R^{(N+1) \times (N+1)}$. Let $r > 0$ be such that
\begin{equation}\label{eq: constraint to shape the doamin of attraction}
\mathcal{P} \triangleq
\begin{bmatrix}
P_{2,2} & P_{2,4} & 0 \\ P_{4,2} & P_{4,4} & 0 \\ 0 & 0 & \gamma 
\end{bmatrix}
\preceq \frac{r}{\mu} R .
\end{equation}
under the constraints (\ref{eq: thm1 constraints}). Note that $\mathcal{P} \preceq \frac{r}{\mu}R$ can always be achieved by selecting $r > 0$ large enough. In this case we have 
\begin{align*}
& \begin{bmatrix} \pi_N z \\ \Vert \mathcal{R}_N z \Vert_{L^2} \end{bmatrix}^\top r R \begin{bmatrix} \pi_N z \\ \Vert \mathcal{R}_N z \Vert_{L^2} \end{bmatrix} 
\leq 1 \\
& \qquad\Rightarrow
\begin{bmatrix} \pi_N z \\ \Vert \mathcal{R}_N z \Vert_{L^2} \end{bmatrix}^\top
\mathcal{P}
\begin{bmatrix} \pi_N z \\ \Vert \mathcal{R}_N z \Vert_{L^2} \end{bmatrix}
\leq \frac{1}{\mu}
\end{align*}
i.e., 
\begin{equation}\label{eq: minimization radius r - L2 norm}
\left\{ z \in L^2(0,1) : 
\begin{bmatrix} \pi_N z \\ \Vert \mathcal{R}_N z \Vert_{L^2} \end{bmatrix}^\top R \begin{bmatrix} \pi_N z \\ \Vert \mathcal{R}_N z \Vert_{L^2} \end{bmatrix}  \leq \frac{1}{r}
\right\} 
\subset \mathcal{E}_1
\end{equation}
where $\mathcal{E}_1$ is given by (\ref{eq: ellipsoid thm1}). In this context, for given $N \geq N_0 +1$ and $\kappa \in (0,\delta]$ such that the constraints (\ref{eq: thm1 constraints}) have been found feasible, we are interested in minimizing $r > 0$ under the constraints (\ref{eq: thm1 constraints}) and (\ref{eq: constraint to shape the doamin of attraction}) with decision variables $P,\alpha,\beta,\gamma,\mu,T,C,r$. 

Note that the obtained minimization problem is nonlinear. In practice, it is easier to solve iteratively the following sub-optimal problem. First, we fix the values of $\alpha,\mu > 0$ to values associated with a feasible solution of the constraints (\ref{eq: thm1 constraints}). By doing so, the only nonlinearity is the product term $TC$ in $\Theta_1(\kappa) \preceq 0$, making the problem bilinear. Second, one can successively fix either the value of $T$ or $C$ to its previously computed value in order to iteratively minimize the value of $r > 0$ under LMI constraints. This approach, although sub-optimal, has the merit to be numerically efficient.

The above procedure also applies to the setting of Theorem~\ref{thm: main result 2}. In that case, (\ref{eq: constraint to shape the doamin of attraction}) under the constraints (\ref{eq: thm2 constraints}) implies that 
\begin{equation}\label{eq: minimization radius r - H1 norm}
\left\{ z \in D(\mathcal{A}) : 
\begin{bmatrix} \pi_N z \\ \Vert \mathcal{R}_N \mathcal{A}^{1/2} z \Vert_{L^2} \end{bmatrix}^\top R \begin{bmatrix} \pi_N z \\ \Vert \mathcal{R}_N \mathcal{A}^{1/2} z \Vert_{L^2} \end{bmatrix}  \leq \frac{1}{r}
\right\} 
\subset \mathcal{E}_2
\end{equation}
where $\mathcal{E}_2$ is given by (\ref{eq: ellipsoid thm2}).

\section{Numerical example}\label{sec: numerical example}

We illustrate the theoretical results of Theorem~\ref{thm: main result 1} and Theorem~\ref{thm: main result 2} in the case of Dirichlet boundary conditions ($\theta_1 = \theta_2 = 0$) with $p = 1$ and $\tilde{q} = - 10$, yielding an unstable open-loop reaction-diffusion PDE. We consider the case of $m = 2$ control inputs characterized by $b_1(x) = \cos(x) 1_{[1/10,3/10]}(x)$ and $b_2(x) = -(1/2+x) 1_{[7/10,9/10]}(x)$. The associated saturation levels are set as $l_1 = 1$ and $l_2 = 2$. The measured output is characterized by $c(x) = 1_{[9/20,11/20]}(x)$.

We select $q = 1 > 0$ and $q_c = 11$ which are such that (\ref{eq: decomposition of the reaction term}) holds. We set $\delta = 1$ giving $N_0 = 1$. The feedback and observation gains are set as $K = \begin{bmatrix} 2.59 & 3.41 \end{bmatrix}^\top$ and $L = 15.13$. The application of the procedure reported in Subsection~\ref{subsubsec: Derivation of LMI conditions} shows the feasibility of (\ref{eq: thm1 constraints - strict version}) for $N = 4$ in the cases of both Theorem~\ref{thm: main result 1} and Theorem~\ref{thm: main result 2}. We now set $R = \mathrm{diag}(I_4,0.005)$ and we aim at minimizing the value of $r > 0$ such that (\ref{eq: minimization radius r - L2 norm}) and (\ref{eq: minimization radius r - H1 norm}) hold simultaneously. Applying the procedure reported in Subsection~\ref{subsubsec: shaping domain of attraction}, the value of $r > 0$ decreases from $7.27$ to $0.41$.

We consider the initial condition $z_0(x) = 8.5 x(1-x)$ which is such that $z_0 \in \mathcal{E}_1 \cap \mathcal{E}_2$, ensuring based on Theorems~\ref{thm: main result 1} and~\ref{thm: main result 2} the exponential decay of the system trajectories to zero in both $L^2$ and $H^1$ norms. The behavior of the closed-loop system (obtained based on the 50 dominant modes of the PDE) associated with this initial condition is depected in Fig~\ref{fig: sim1}. It can been seen that both the state of the PDE and the observation error converge to zero in spite of significant saturations in both scalar control input channels $u_1(t)$ and $u_2(t)$. This numerical result shows that Theorems~\ref{thm: main result 1} and~\ref{thm: main result 2} can be used to assess the exponential decay of certain system trajectories of the closed-loop plant even when the saturation mechanism is actively solicited during the transient.

\begin{figure}
\centering
\subfigure[State $z(t,x)$]{
\includegraphics[width=3.25in]{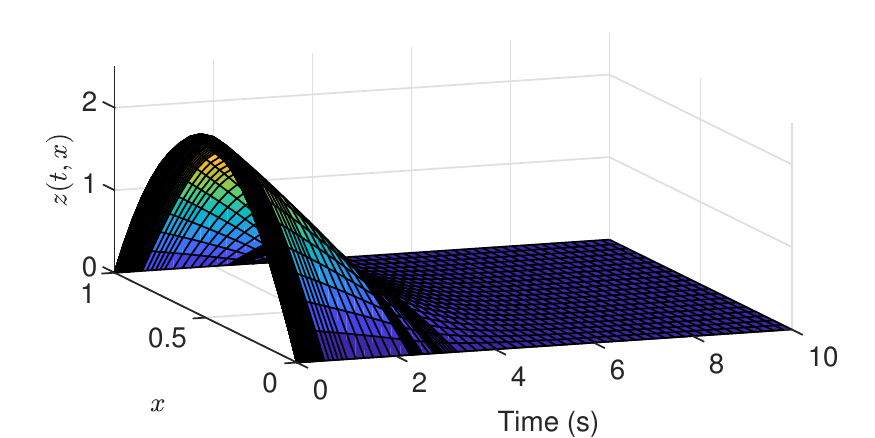}
}
\subfigure[Observation error $z(t,x)-\sum_{n=1}^{N} \hat{z}_n(t) \phi_n(x)$]{
\includegraphics[width=3.25in]{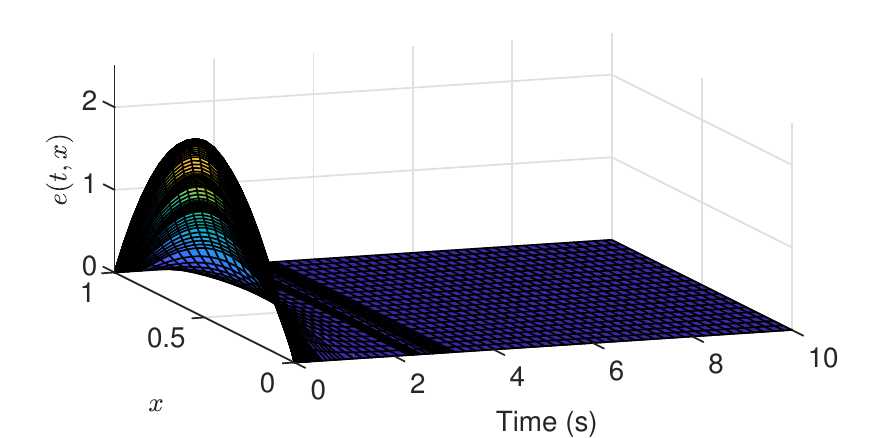}
}
\subfigure[Control input $u(t)$]{
\includegraphics[width=3.25in]{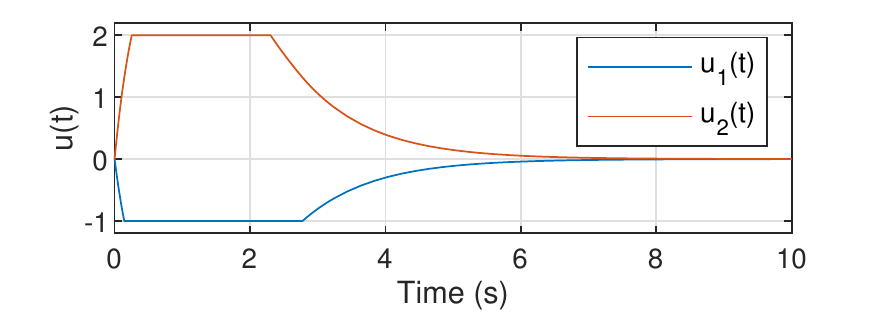}
}
\caption{Closed-loop behavior for the initial condition $z_0(x) = 8.5 x(1-x)$}
\label{fig: sim1}
\end{figure}

\section{Extension to boundary measurements}\label{sec: extension}

Assume that $p \in \mathcal{C}^2([0,1])$ and consider instead of the bounded measurement operator (\ref{eq: bounded measurement operator}) that the observation is made by either the left Dirichlet boundary measurement
\begin{equation}\label{eq: Dirichlet boundary measurement}
y(t) = z(t,0) = \sum_{n \geq 1} c_n z_n(t)
\end{equation} 
with $c_n = \phi_n(0)$ in the case $\theta_1 \in (0,\pi/2]$ or the left Neumann boundary measurement
\begin{equation}\label{eq: Neumann boundary measurement}
y(t) = z_x(t,0) = \sum_{n \geq 1} c_n z_n(t) 
\end{equation} 
with $c_n = \phi'_n(0)$ in the case $\theta_1 \in [0,\pi/2)$, where the above series expansion hold for classical solutions. We reuse the control architecture (\ref{eq: control law}) and the control design procedure presented in Subsection~\ref{subsec: control architecture} with the updated version of $c_n \in\R$. Note that the existence of classical solutions associated with initial conditions $z_0 \in D(\mathcal{A})$ for the subsequent closed-loop system is the consequence of~\cite[Sec.~6.3]{pazy2012semigroups}. After replacement of the definition of $E^{N-N_0}$ and $C_1$ by 
\begin{align*}
E^{N-N_0} & = \begin{bmatrix} \sqrt{\lambda_{N_0+1}} e_{N_0 + 1} & \ldots & \sqrt{\lambda_{N}} e_{N} \end{bmatrix}^\top , \\
C_1 & = \begin{bmatrix} \frac{c_{N_0 + 1}}{\sqrt{\lambda_{N_0 +1}}} & \ldots & \frac{c_{N + 1}}{\sqrt{\lambda_{N +1}}} \end{bmatrix}
\end{align*}
in the case of the Dirichlet boundary measurement (\ref{eq: Dirichlet boundary measurement}) and by 
\begin{align*}
E^{N-N_0} & = \begin{bmatrix} \lambda_{N_0+1} e_{N_0 + 1} & \ldots & \lambda_{N} e_{N} \end{bmatrix}^\top , \\
C_1 & = \begin{bmatrix} \frac{c_{N_0 + 1}}{\lambda_{N_0 +1}} & \ldots & \frac{c_{N + 1}}{\lambda_{N +1}} \end{bmatrix}
\end{align*}
in the case of the Neumann boundary measurement (\ref{eq: Neumann boundary measurement}), we infer that (\ref{eq: dynamics truncated model}) holds with $F$ defined by (\ref{eq: def matrix F}) with the updated version of $C_1$. The main interest of this re-scaling procedure is that the matrix $\Vert C_1 \Vert = O(1)$ as $N \rightarrow + \infty$, which will allow the application of Lemma~\ref{lem: useful lemma}; see~\cite{lhachemi2020finite} for details.

\begin{remark}
It can be seen that the pair $(A_0,C_0)$ is observable if and only if $c_n \neq 0$ for $1 \leq n \leq N_0$. In the case $c_n = \phi_n(0)$ with $\theta_1 \in (0,\pi/2]$, due to the boundary condition $\cos(\theta_1) \phi_n(0) - \sin(\theta_1) \phi_n'(0) = 0$, the identity $c_n = 0$ would imply that $\phi_n(0) = \phi_n'(0)$, giving the contradiction $\phi_n = 0$. A similar argument applies in the case $c_n = \phi_n'(0)$ with $\theta_1 \in [0,\pi/2)$. Hence, in both cases, the pair $(A_0,C_0)$ is observable.
\end{remark}

In the case of the Dirichlet boundary measurement (\ref{eq: Dirichlet boundary measurement}), the stability analysis can now be conducted similarly to the proof of Theorem~\ref{thm: main result 2} by modifying the estimate of $\zeta(t)$ by $\zeta(t)^2 \leq M_{1,\phi} \sum_{n \geq N+1} \lambda_n z_n(t)^2$ with $M_{1,\phi} = \sum_{n \geq N+1} \frac{\phi_n(0)^2}{\lambda_n} < +\infty$. We hence obtain the following result.

\begin{theorem}\label{thm: main result 3}
Let $\theta_1 \in(0,\pi/2]$, $\theta_2\in[0,\pi/2]$, $p \in \mathcal{C}^2([0,1])$ with $p > 0$, $\tilde{q} \in \mathcal{C}^0([0,1])$, and $\ell\in(\R_{>0})^m$. Let $q \in \mathcal{C}^0([0,1])$ and $q_c \in\R$ be such that (\ref{eq: decomposition of the reaction term}) holds with the additional constraint $q > 0$. Let $b_k \in L^2(0,1)$ for $1 \leq k \leq m$. Consider the reaction-diffusion system described by (\ref{eq: distributed measurement - RD system}) with left Dirichlet boundary measurement (\ref{eq: Dirichlet boundary measurement}). Let $N_0 \geq 1$ and $\delta > 0$ be given such that $- \lambda_n + q_c < -\delta < 0$ for all $n \geq N_0 +1$. Assume that for any $1 \leq n \leq N_0$, there exists $1 \leq k = k(n) \leq m$ such that $b_{n,k} \neq 0$. Let $K \in\R^{m \times N_0}$ and $L \in\R^{N_0}$ be such that $A_0 + B_0 K$ and $A_0 - L C_0$ are Hurwitz with eigenvalues that have a real part strictly less than $-\delta < 0$. For a given $N \geq N_0 +1$, assume that there exist a symmetric positive definite $P \in \R^{2N \times 2N}$, $\alpha > 1$, $\beta,\gamma,\mu,\kappa > 0$, a diagonal positive definite $T \in \R^{m \times m}$, and $C \in \R^{m \times N_0}$ such that 
\begin{equation}\label{eq: thm3 constraints}
\Theta_1(\kappa) \preceq 0, \quad \Theta_2 \succeq 0, \quad\Theta_3(\kappa) \leq 0
\end{equation}
where $\Theta_1(\kappa)$ and $\Theta_2$ are defined as in Theorem~\ref{thm: main result 1} while 
\begin{align*}
\Theta_3(\kappa) & = 2\gamma \left\{ - \left( 1 - \frac{1}{\alpha} \right) \lambda_{N+1} + q_c + \kappa \right\} + \beta M_{1,\phi} .
\end{align*}
Consider the block representation $P = (P_{i,j})_{1 \leq i,j \leq 4}$ with dimensions that are compatible with (\ref{eq: state-vector closed-loop truncated model}) and define 
\begin{align}
\mathcal{E}_3 & = \bigg\{
z \in D(\mathcal{A}) : \begin{bmatrix} \pi_{N_0}z \\ \pi_{N_0,N} \mathcal{A}^{1/2}z \end{bmatrix}^\top
\begin{bmatrix}
P_{2,2} & P_{2,4} \\ P_{4,2} & P_{4,4} 
\end{bmatrix}
\begin{bmatrix} \pi_{N_0}z \\ \pi_{N_0,N} \mathcal{A}^{1/2}z \end{bmatrix} \nonumber \\
& \hspace{2.4cm} + \gamma \Vert \mathcal{R}_N \mathcal{A}^{1/2}z \Vert_{L^2}^2
< \frac{1}{\mu}
\bigg\} . \label{eq: ellipsoid thm3}
\end{align}
Then, considering the closed-loop system composed of the plant (\ref{eq: distributed measurement - RD system}) with left Dirichlet boundary measurement (\ref{eq: Dirichlet boundary measurement}) and the control law (\ref{eq: control law}), there exists $M > 0$ such that for any initial condition $z_0 \in \mathcal{E}_3$ and with a zero initial condition of the observer (i.e., $\hat{z}_n(0)=0$ for all $1 \leq n \leq N$), the system trajectory satisfies $\Vert z(t,\cdot) \Vert_{H^1}^2 + \sum_{n=1}^{N} \hat{z}_n(t)^2 \leq M e^{-2 \kappa t} \Vert z_0 \Vert_{H^1}^2$ for all $t \geq 0$. Moreover, for any fixed $\kappa \in (0,\delta]$, the above constraints are always feasible for $N$ large enough.
\end{theorem}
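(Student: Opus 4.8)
The plan is to mirror the proof of Theorem~\ref{thm: main result 2}, since the only structural changes introduced by the Dirichlet boundary measurement are (i) the re-scaling of $E^{N-N_0}$ and $C_1$ described above, which leaves the abstract dynamics (\ref{eq: dynamics truncated model}) intact with $F$ given by (\ref{eq: def matrix F}) and the updated $C_1$, and (ii) the replacement of the estimate of $\zeta$. First I would restrict to classical solutions associated with initial conditions $z_0\in D(\mathcal{A})$, whose existence follows from~\cite[Sec.~6.3]{pazy2012semigroups}. Unlike Theorems~\ref{thm: main result 1} and~\ref{thm: main result 2}, no density extension to mild solutions is carried out here, because the boundary output $y(t)=z(t,0)$ is only meaningful for solutions regular enough to admit a trace; this is precisely why $\mathcal{E}_3$ in (\ref{eq: ellipsoid thm3}) is defined as a subset of $D(\mathcal{A})$.

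Using the Lyapunov functional $V(X,z)=X^\top P X+\gamma\sum_{n\geq N+1}\lambda_n\langle z,\phi_n\rangle^2$ of Theorem~\ref{thm: main result 2}, the computation of $\dot V+2\kappa V$ along (\ref{eq: distributed measurement - projection}) and (\ref{eq: dynamics truncated model}) is unchanged, and Young's inequality produces the same cross terms that are absorbed into the blocks of $\Theta_1(\kappa)$. The single modification concerns the bound on $\zeta=\sum_{n\geq N+1}c_n z_n$ with $c_n=\phi_n(0)$: inserting $\sqrt{\lambda_n}/\sqrt{\lambda_n}$ and applying Cauchy--Schwarz yields $\zeta^2\leq M_{1,\phi}\sum_{n\geq N+1}\lambda_n z_n^2$ with $M_{1,\phi}=\sum_{n\geq N+1}\phi_n(0)^2/\lambda_n$, which is finite because $\phi_n(0)=O(1)$ (using $p\in\mathcal{C}^2([0,1])$) together with $\lambda_n\sim\pi^2 n^2 p_*$ from (\ref{eq: estimation lambda_n}). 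Accordingly the term $\beta\Vert\mathcal{R}_N c\Vert_{L^2}^2/\lambda_{N+1}$ of Theorem~\ref{thm: main result 2} becomes $\beta M_{1,\phi}$ in $\Theta_3(\kappa)$. Since $\alpha>1$, the per-mode coefficient $\Gamma_n=2\gamma\{-(1-1/\alpha)\lambda_n+q_c+\kappa\}+\beta M_{1,\phi}$ is nonincreasing in $n$ for $n\geq N+1$, so that $\Gamma_n\leq\Theta_3(\kappa)\leq 0$, and combined with $\Theta_1(\kappa)\preceq 0$ this gives $\dot V+2\kappa V\leq 0$ whenever $\vert(K-C)EX\vert\leq\ell$.

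The remaining steps are essentially verbatim from Theorem~\ref{thm: main result 2}. A Schur complement applied to $\Theta_2\succeq 0$ shows that $V(X,z)\leq 1/\mu$ forces $\vert(K-C)EX\vert\leq\ell$, so the sublevel set $\{V\leq 1/\mu\}$ is forward invariant and $V(X(t),z(t,\cdot))\leq e^{-2\kappa t}V(X(0),z_0)$ holds. For $z_0\in\mathcal{E}_3$ with zero observer initial condition, the re-scaled state vector satisfies $X(0)=\mathrm{col}(0,\pi_{N_0}z_0,0,\pi_{N_0,N}\mathcal{A}^{1/2}z_0)$ and hence $V(X(0),z_0)<1/\mu$, which is the strict inequality defining $\mathcal{E}_3$; the $H^1$ decay estimate then follows from the norm equivalence (\ref{eq: inner product Af and f}), valid since $q>0$. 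Finally, feasibility for large $N$ is obtained exactly as in Theorem~\ref{thm: main result 1}: the re-scaling guarantees $\Vert C_1\Vert=O(1)$, so the Lemma reported in the Appendix still applies to $F+\kappa I$, while $M_{1,\phi}\to 0$ as $N\to\infty$ (it is a tail of a convergent series) ensures $\Theta_3(\kappa)\leq 0$ for $N$ large enough.

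I expect the main obstacle to be the $\zeta$ estimate and its interaction with the feasibility argument: one must verify both that $M_{1,\phi}$ is finite (so that $\Theta_3(\kappa)$ is well defined for each fixed $N$) and that the re-scaling keeps $\Vert C_1\Vert$ bounded uniformly in $N$. Both properties hinge on the asymptotics $\phi_n(0)=O(1)$ and $\lambda_n\sim n^2$; the regularity hypothesis $p\in\mathcal{C}^2([0,1])$ is exactly what secures the former, and it is therefore the point where the boundary-measurement analysis genuinely departs from the bounded-output case of Theorem~\ref{thm: main result 2}.
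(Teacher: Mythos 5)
Your proposal is correct and follows essentially the same route as the paper, which itself only sketches the argument by reducing it to the proof of Theorem~\ref{thm: main result 2} with the modified estimate $\zeta^2 \leq M_{1,\phi}\sum_{n\geq N+1}\lambda_n z_n^2$ and the re-scaled $E^{N-N_0}$, $C_1$. Your identification of the two genuinely new points --- finiteness of $M_{1,\phi}$ and uniform boundedness of $\Vert C_1\Vert$, both resting on $\phi_n(0)=O(1)$ under $p\in\mathcal{C}^2([0,1])$ --- matches the paper's reasoning, and your feasibility argument is consistent with the paper's choice $\beta=\eta N$, $\gamma=\eta/\sqrt{N}$ since $M_{1,\phi}=O(1/N)$ makes $\beta M_{1,\phi}/(2\gamma)=O(\sqrt{N})$ negligible against $\lambda_{N+1}\sim N^2$.
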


In the case of the Neumann boundary measurement (\ref{eq: Neumann boundary measurement}), the stability analysis can also be conducted similarly to the proof of Theorem~\ref{thm: main result 2} by modifying the estimate of $\zeta(t)$ by $\zeta(t)^2 \leq M_{2,\phi}(\epsilon) \sum_{n \geq N+1} \lambda_n^{3/2+\epsilon} z_n(t)^2$ with $M_{2,\phi}(\epsilon) = \sum_{n \geq N+1} \frac{\phi_n'(0)^2}{\lambda_n^{3/2+\epsilon}} < +\infty$ for any given $\epsilon\in(0,1/2]$. We hence obtain the following result\footnote{In the case of Theorem~\ref{thm: main result 4}, the feasability of the constraints (\ref{eq: thm4 constraints}) for $N$ large enough is obtained by conducting the analysis as in the proof of Theorem~\ref{thm: main result 2} while selecting $\beta = \eta N^{1/8}$, $\gamma = \eta/N^{3/16}$, and $\epsilon = 1/8$.}.

\begin{theorem}\label{thm: main result 4}
Let $\theta_1 \in[0,\pi/2)$, $\theta_2\in[0,\pi/2]$, $p \in \mathcal{C}^2([0,1])$ with $p > 0$, $\tilde{q} \in \mathcal{C}^0([0,1])$, and $\ell\in(\R_{>0})^m$. Let $q \in \mathcal{C}^0([0,1])$ and $q_c \in\R$ be such that (\ref{eq: decomposition of the reaction term}) holds with the additional constraint $q > 0$. Let $b_k \in L^2(0,1)$ for $1 \leq k \leq m$. Consider the reaction-diffusion system described by (\ref{eq: distributed measurement - RD system}) with left Neumann boundary measurement (\ref{eq: Neumann boundary measurement}). Let $N_0 \geq 1$ and $\delta > 0$ be given such that $- \lambda_n + q_c < -\delta < 0$ for all $n \geq N_0 +1$. Assume that for any $1 \leq n \leq N_0$, there exists $1 \leq k = k(n) \leq m$ such that $b_{n,k} \neq 0$. Let $K \in\R^{m \times N_0}$ and $L \in\R^{N_0}$ be such that $A_0 + B_0 K$ and $A_0 - L C_0$ are Hurwitz with eigenvalues that have a real part strictly less than $-\delta < 0$. For a given $N \geq N_0 +1$, assume that there exist a symmetric positive definite $P \in \R^{2N \times 2N}$, $\alpha > 1$, $\beta,\gamma,\mu,\kappa > 0$, $\epsilon\in(0,1/2]$, a diagonal positive definite $T \in \R^{m \times m}$, and $C \in \R^{m \times N_0}$ such that 
\begin{equation}\label{eq: thm4 constraints}
\Theta_1(\kappa) \preceq 0, \quad \Theta_2 \succeq 0, \quad\Theta_3(\kappa) \leq 0 , \quad \Theta_4 \geq 0
\end{equation}
where $\Theta_1(\kappa)$ and $\Theta_2$ are defined as in Theorem~\ref{thm: main result 1} while 
\begin{align*}
\Theta_3(\kappa) & = 2\gamma \left\{ - \left( 1 - \frac{1}{\alpha} \right) \lambda_{N+1} + q_c + \kappa \right\} + \beta M_{2,\phi}(\epsilon) \lambda_{N+1}^{1/2+\epsilon} \\
\Theta_4 & = 2\gamma \left\{ 1 - \frac{1}{\alpha} \right\} - \dfrac{\beta M_{2,\phi}(\epsilon)}{\lambda_{N+1}^{1/2-\epsilon}} .
\end{align*}
Consider the block representation $P = (P_{i,j})_{1 \leq i,j \leq 4}$ with dimensions that are compatible with (\ref{eq: state-vector closed-loop truncated model}) and define
\begin{align}
\mathcal{E}_4 & = \bigg\{
z \in D(\mathcal{A}) : \begin{bmatrix} \pi_{N_0}z \\ \pi_{N_0,N} \mathcal{A}z \end{bmatrix}^\top
\begin{bmatrix}
P_{2,2} & P_{2,4} \\ P_{4,2} & P_{4,4} 
\end{bmatrix}
\begin{bmatrix} \pi_{N_0}z \\ \pi_{N_0,N} \mathcal{A}z \end{bmatrix} \nonumber \\
& \hspace{2.4cm} + \gamma \Vert \mathcal{R}_N \mathcal{A}^{1/2}z \Vert_{L^2}^2
< \frac{1}{\mu}
\bigg\} . \label{eq: ellipsoid thm4}
\end{align}
Then, considering the closed-loop system composed of the plant (\ref{eq: distributed measurement - RD system}) with left Neumann boundary measurement (\ref{eq: Neumann boundary measurement}) and the control law (\ref{eq: control law}), there exists $M > 0$ such that for any initial condition $z_0 \in \mathcal{E}_4$ and with a zero initial condition of the observer (i.e., $\hat{z}_n(0)=0$ for all $1 \leq n \leq N$), the system trajectory satisfies $\Vert z(t,\cdot) \Vert_{H^1}^2 + \sum_{n=1}^{N} \hat{z}_n(t)^2 \leq M e^{-2 \kappa t} \Vert z_0 \Vert_{H^1}^2$ for all $t \geq 0$. Moreover, for any fixed $\kappa \in (0,\delta]$, the above constraints are always feasible for $N$ large enough.
\end{theorem}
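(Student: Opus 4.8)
The plan is to mirror the proof of Theorem~\ref{thm: main result 2}, working throughout with classical solutions associated with initial conditions $z_0 \in \mathcal{E}_4 \subset D(\mathcal{A})$, whose existence is guaranteed by~\cite[Sec.~6.3]{pazy2012semigroups}; because $\mathcal{E}_4 \subset D(\mathcal{A})$ no density extension to mild solutions is needed. I would use the same Lyapunov functional $V(X,z) = X^\top P X + \gamma \sum_{n \geq N+1} \lambda_n \left< z , \phi_n \right>^2$, now with $F$, $E^{N-N_0}$, and $C_1$ given by their Neumann-rescaled versions (with the $\lambda_n$-weighting of $E^{N-N_0}$ and $C_1$ introduced before the statement). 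Computing $\dot{V} + 2\kappa V$ along (\ref{eq: distributed measurement - projection}) and (\ref{eq: dynamics truncated model}) and applying Young's inequality to the cross terms $2\gamma \sum_{n \geq N+1} \lambda_n z_n \mathcal{L}_{b,n} w$ exactly as before produces the same finite-dimensional form $\tilde{X}^\top \Theta_1(\kappa) \tilde{X}$ (now built from the Neumann-rescaled $F$) plus a residual tail $\sum_{n \geq N+1} \lambda_n \Gamma_n z_n^2$. The only structural change is in how $2 X^\top P \mathcal{L} \zeta$ is absorbed, since the tail of the Neumann output $\zeta = \sum_{n \geq N+1} \phi_n'(0) z_n$ involves the unbounded coefficients $\phi_n'(0) = O(\sqrt{\lambda_n})$.

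The new ingredient is thus the estimate of $\zeta$. Since $\phi_n'(0) = O(\sqrt{\lambda_n})$ and $\lambda_n \geq \pi^2(n-1)^2 p_*$ by (\ref{eq: estimation lambda_n}), the summand $\phi_n'(0)^2/\lambda_n^{3/2+\epsilon} = O(\lambda_n^{-1/2-\epsilon}) = O(n^{-1-2\epsilon})$ is summable for every $\epsilon \in (0,1/2]$, so $M_{2,\phi}(\epsilon) < +\infty$. A Cauchy--Schwarz split $\phi_n'(0) z_n = \bigl(\phi_n'(0)/\lambda_n^{3/4+\epsilon/2}\bigr)\bigl(\lambda_n^{3/4+\epsilon/2} z_n\bigr)$ then yields $\zeta^2 \leq M_{2,\phi}(\epsilon) \sum_{n \geq N+1} \lambda_n^{3/2+\epsilon} z_n^2$. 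Dominating the $\zeta$ contribution by $\beta M_{2,\phi}(\epsilon) \sum_{n \geq N+1} \lambda_n^{3/2+\epsilon} z_n^2 - \beta \zeta^2 \geq 0$, the residual tail acquires the coefficient $\lambda_n \Gamma_n$ with $\Gamma_n = 2\gamma\{-(1-1/\alpha)\lambda_n + q_c + \kappa\} + \beta M_{2,\phi}(\epsilon)\lambda_n^{1/2+\epsilon}$, exactly as announced before the statement.

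The crux, and the step I expect to be the main obstacle, is to show $\Gamma_n \leq 0$ for all $n \geq N+1$: unlike in Theorems~\ref{thm: main result 1} and~\ref{thm: main result 2}, the map $\lambda \mapsto \Gamma(\lambda)$ is not monotone, carrying both the dissipative term $-2\gamma(1-1/\alpha)\lambda$ and the growing term $\beta M_{2,\phi}(\epsilon)\lambda^{1/2+\epsilon}$, so evaluating at $\lambda_{N+1}$ alone does not control the whole tail. Here $\Theta_3(\kappa) \leq 0$ and $\Theta_4 \geq 0$ act jointly. Since $\epsilon \leq 1/2$ the exponent $1/2-\epsilon$ is nonnegative, so for $\lambda \geq \lambda_{N+1}$ one has $\lambda^{1/2+\epsilon} = \lambda/\lambda^{1/2-\epsilon} \leq \lambda/\lambda_{N+1}^{1/2-\epsilon}$; substituting this bound linearizes $\Gamma(\lambda)$ into the affine majorant $2\gamma(q_c+\kappa) - \Theta_4\,\lambda$. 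The condition $\Theta_4 \geq 0$ makes this majorant nonincreasing, so its maximum over $[\lambda_{N+1},+\infty)$ is attained at $\lambda_{N+1}$, where it equals precisely $\Theta_3(\kappa)$; hence $\Theta_3(\kappa) \leq 0$ closes the argument. Combined with $\Theta_1(\kappa) \preceq 0$, this yields $\dot{V} + 2\kappa V \leq 0$ on $\{\, \vert (K-C)EX \vert \leq \ell \,\}$, and the generalized sector condition of Lemma~\ref{eq: generalized sector condition} together with $\Theta_2 \succeq 0$ (via Schur complement) confines the trajectory to $\{V \leq 1/\mu\}$ by the same invariance/contradiction argument as in Theorem~\ref{thm: main result 2}. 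The $H^1$ decay then follows from the definition of $V$, the coercivity estimate (\ref{eq: inner product Af and f}), and the identification of $\mathcal{E}_4$ through $V(X(0),z_0) < 1/\mu$ with the zero-observer Neumann-rescaled initial condition $X(0) = \mathrm{col}(0,\pi_{N_0}z_0,0,\pi_{N_0,N}\mathcal{A}z_0)$.

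Finally, for feasibility at fixed $\kappa \in (0,\delta]$ I would reproduce the construction of Theorem~\ref{thm: main result 2}: solve $F^\top Q + QF + 2\kappa Q = -I$ with $\Vert Q \Vert = O(1)$ via the Appendix lemma (Lemma~\ref{lem: useful lemma}), set $P = \eta Q$, and adopt the scalings $\beta = \eta N^{1/8}$, $\gamma = \eta/N^{3/16}$, $\epsilon = 1/8$ indicated in the footnote. The conditions $\Theta_1(\kappa) \preceq 0$, $\Theta_2 \succeq 0$, $\Theta_3(\kappa) \leq 0$ are handled verbatim as in Theorem~\ref{thm: main result 2}; the one genuinely new verification is $\Theta_4 \geq 0$, which holds for $N$ large because $M_{2,\phi}(1/8) = O(N^{-1/4}) \to 0$ while $\lambda_{N+1} = O(N^2)$ grows, so the subtracted part $\beta M_{2,\phi}(\epsilon)/\lambda_{N+1}^{1/2-\epsilon} = O(\eta N^{-7/8})$ decays faster than the positive part $2\gamma(1-1/\alpha) = O(\eta N^{-3/16})$.
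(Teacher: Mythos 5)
Your proposal is correct and follows exactly the route the paper intends: it reproduces the Lyapunov analysis of Theorem~\ref{thm: main result 2} with the modified tail estimate $\zeta^2 \leq M_{2,\phi}(\epsilon)\sum_{n\geq N+1}\lambda_n^{3/2+\epsilon}z_n^2$ and the footnoted scalings $\beta=\eta N^{1/8}$, $\gamma=\eta/N^{3/16}$, $\epsilon=1/8$ for feasibility. In fact you supply the one step the paper leaves implicit -- the linearization $\lambda^{1/2+\epsilon}\leq \lambda/\lambda_{N+1}^{1/2-\epsilon}$ showing that $\Theta_4\geq 0$ and $\Theta_3(\kappa)\leq 0$ jointly force the non-monotone tail coefficient $\Gamma_n$ to be nonpositive for all $n\geq N+1$ -- and you do so correctly.
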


\begin{remark}
Even if Theorems~\ref{thm: main result 3} and~\ref{thm: main result 4} address the case of left boundary measurements, right boundary measurements can also be conducted similarly. This is also the case of in-domain measurements of the type $y(t) = z(t,\xi)$ and $y(t) = z_x(t,\xi)$ for some $\xi\in(0,1)$. In that case, one need to explicitly check that the pair $(A_0,C_0)$ is observable, which holds true if an only if $\phi_n(\xi) \neq 0$ for all $1 \leq n \leq N_0$ in the case of a Dirichlet measurement while $\phi_n'(\xi) \neq 0$ for all $1 \leq n \leq N_0$ in the case of a Neumann measurement.
\end{remark}

\section{Conclusion}\label{sec: conclusion}
This paper has studied the output feedback stabilization of a reaction-diffusion PDEs by means of bounded control inputs. The control strategy takes the form of a finite-dimensional controller. The reported stability analysis takes advantage of Lyapunov functionals along with a classical generalized sector condition that is commonly used in the analysis of finite-dimensional saturated systems. The obtained sets of constraints ensuring the stability of the closed-loop system take an explicit form and have been shown to be feasible when the order of the observer is large enough. An explicit subset of the domain of attraction of the closed-loop system has been derived.



\appendix

\section*{Technical lemma}

The statement of the following Lemma is borrowed from~\cite[Appendix]{lhachemi2020finite} and generalizes a result presented in~\cite{katz2020constructive}.

\begin{lemma}\label{lem: useful lemma}
Let $n,m,N \geq 1$, $M_{11} \in \R^{n \times n}$ and $M_{22} \in \R^{m \times m}$ Hurwitz, $M_{12} \in \R^{n \times m}$, $M_{14}^N \in\R^{n \times N}$, $M_{24}^N \in\R^{m \times N}$, $M_{31}^N \in\R^{N \times n}$, $M_{33}^N,M_{44}^N \in \R^{N \times N}$, and
\begin{equation*}
F^N = \begin{bmatrix}
M_{11} & M_{12} & 0 & M_{14}^N \\
0 & M_{22} & 0 & M_{24}^N \\
M_{31}^N & 0 & M_{33}^N & 0 \\
0 & 0 & 0 & M_{44}^N
\end{bmatrix} .
\end{equation*}
We assume that there exist constants $C_0 , \kappa_0 > 0$ such that $\Vert e^{M_{33}^N t} \Vert \leq C_0 e^{-\kappa_0 t}$ and $\Vert e^{M_{44}^N t} \Vert \leq C_0 e^{-\kappa_0 t}$ for all $t \geq 0$ and all $N \geq 1$. Moreover, we assume that there exists a constant $C_1 > 0$ such that $\Vert M_{14}^N \Vert \leq C_1$, $\Vert M_{24}^N \Vert \leq C_1$, and $\Vert M_{31}^N \Vert \leq C_1$ for all $N \geq 1$. Then there exists a constant $C_2 > 0$ such that, for any $N \geq 1$, there exists a symmetric matrix $P^N \in\R^{n+m+2N}$ with $P^N \succ 0$ such that $(F^N)^\top P^N + P^N F^N= - I$ and $\Vert P^N \Vert \leq C_2$.
\end{lemma}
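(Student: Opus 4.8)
The plan is to exhibit $P^N$ explicitly as the unique solution of the Lyapunov equation and to control its norm through a uniform bound on the propagator $e^{F^N t}$. First I would observe that the block pattern of $F^N$ is that of a \emph{cascade}: writing the state as $x=\mathrm{col}(x_1,x_2,x_3,x_4)$ with $x_1\in\R^n$, $x_2\in\R^m$, $x_3,x_4\in\R^N$, the system $\dot x=F^N x$ reads $\dot x_4=M_{44}^N x_4$, $\dot x_2=M_{22}x_2+M_{24}^N x_4$, $\dot x_1=M_{11}x_1+M_{12}x_2+M_{14}^N x_4$, and $\dot x_3=M_{31}^N x_1+M_{33}^N x_3$. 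In particular the diagonal blocks $M_{11},M_{22},M_{33}^N,M_{44}^N$ are all Hurwitz (the first two by assumption, the last two by the exponential bounds), so $F^N$ is Hurwitz and $P^N\triangleq\int_0^{\infty}e^{(F^N)^\top t}e^{F^N t}\diff t$ is well defined, symmetric, and positive definite (since $x^\top P^N x=\int_0^\infty\Vert e^{F^N t}x\Vert^2\diff t>0$ for $x\neq 0$), and differentiating $t\mapsto e^{(F^N)^\top t}e^{F^N t}$ shows it solves $(F^N)^\top P^N+P^N F^N=-I$. The bound $\Vert P^N\Vert\le\int_0^\infty\Vert e^{F^N t}\Vert^2\diff t$ then reduces the whole statement to a uniform-in-$N$ estimate of $\Vert e^{F^N t}\Vert$.

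Second, I would propagate the cascade by Duhamel's formula, solving for the blocks in the order $x_4,x_2,x_1,x_3$. Since $M_{11},M_{22}$ are fixed Hurwitz matrices, there are fixed constants with $\Vert e^{M_{11}t}\Vert,\Vert e^{M_{22}t}\Vert\le C e^{-\kappa t}$, while the hypotheses give $\Vert e^{M_{33}^N t}\Vert,\Vert e^{M_{44}^N t}\Vert\le C_0 e^{-\kappa_0 t}$ and $\Vert M_{14}^N\Vert,\Vert M_{24}^N\Vert,\Vert M_{31}^N\Vert\le C_1$ \emph{uniformly} in $N$. Substituting $x_4(t)=e^{M_{44}^N t}x_4(0)$ into the variation-of-constants formula for $x_2$, then $x_2$ and $x_4$ into that for $x_1$, then $x_1$ into that for $x_3$, and repeatedly bounding convolutions of decaying exponentials via $\int_0^t e^{-a(t-s)}e^{-bs}\diff s\le(1+t)e^{-\kappa' t}$ valid for any $\kappa'<\min\{a,b\}$, yields an estimate of the form $\Vert e^{F^N t}\Vert\le C(1+t^3)e^{-\kappa' t}$ with $\kappa'=\min\{\kappa,\kappa_0\}-\varepsilon$ for a fixed $\varepsilon\in(0,\min\{\kappa,\kappa_0\})$ and constants \emph{independent of $N$}. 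Squaring and integrating then gives $\Vert P^N\Vert\le\int_0^\infty C^2(1+t^3)^2 e^{-2\kappa' t}\diff t=:C_2<\infty$, uniform in $N$, which is the desired conclusion.

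The point requiring care, and the main obstacle, is to verify that every constant produced along the cascade is genuinely independent of $N$. The decay rates and prefactors for the fixed blocks $M_{11},M_{22}$ are obviously $N$-free, and the assumptions are precisely tailored so that the $N$-dependent data (the blocks $M_{33}^N,M_{44}^N$ through $C_0,\kappa_0$, and $M_{14}^N,M_{24}^N,M_{31}^N$ through $C_1$) enter only through uniform bounds. The only subtlety is the bookkeeping of the accumulating polynomial factors $(1+t^k)$ generated at each convolution; using the slightly degraded rate $\kappa'<\min\{\kappa,\kappa_0\}$ lets each convolution be absorbed by a uniform $(1+t)$ factor even when several exponentials share the same rate, so no constant is allowed to depend on the differences of the (a priori unknown) spectral abscissae. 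Once this uniform propagator bound is established, the finiteness and uniformity of $C_2$, together with the positive definiteness and the Lyapunov identity already obtained from the integral representation, complete the argument.
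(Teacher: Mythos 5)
Your argument is correct: the ordering $x_4\to x_2\to x_1\to x_3$ makes $F^N$ block-triangular up to a permutation (hence Hurwitz), the integral representation $P^N=\int_0^\infty e^{(F^N)^\top t}e^{F^N t}\,\mathrm{d}t$ gives the symmetric positive definite solution of the Lyapunov equation, and every constant generated by the Duhamel iteration depends only on $C_0$, $\kappa_0$, $C_1$, $\Vert M_{12}\Vert$ and the fixed Hurwitz blocks $M_{11},M_{22}$, so the polynomial-times-exponential bound on $\Vert e^{F^N t}\Vert$ and therefore on $\Vert P^N\Vert$ is indeed uniform in $N$. The paper does not reprove this lemma but imports it from \cite[Appendix]{lhachemi2020finite}, whose proof follows essentially this same route (explicit integral solution plus a uniform-in-$N$ estimate of the matrix exponential exploiting the cascade structure), so your proposal matches the intended argument.
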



\ifCLASSOPTIONcaptionsoff
  \newpage
\fi



\bibliographystyle{IEEEtranS}
\nocite{*}
\bibliography{IEEEabrv,mybibfile}

\begin{thebibliography}{10}
\providecommand{\url}[1]{#1}
\csname url@samestyle\endcsname
\providecommand{\newblock}{\relax}
\providecommand{\bibinfo}[2]{#2}
\providecommand{\BIBentrySTDinterwordspacing}{\spaceskip=0pt\relax}
\providecommand{\BIBentryALTinterwordstretchfactor}{4}
\providecommand{\BIBentryALTinterwordspacing}{\spaceskip=\fontdimen2\font plus
\BIBentryALTinterwordstretchfactor\fontdimen3\font minus
  \fontdimen4\font\relax}
\providecommand{\BIBforeignlanguage}[2]{{%
\expandafter\ifx\csname l@#1\endcsname\relax
\typeout{** WARNING: IEEEtranS.bst: No hyphenation pattern has been}%
\typeout{** loaded for the language `#1'. Using the pattern for}%
\typeout{** the default language instead.}%
\else
\language=\csname l@#1\endcsname
\fi
#2}}
\providecommand{\BIBdecl}{\relax}
\BIBdecl

\bibitem{balas1988finite}
M.~J. Balas, ``Finite-dimensional controllers for linear distributed parameter
  systems: exponential stability using residual mode filters,'' \emph{Journal
  of Mathematical Analysis and Applications}, vol. 133, no.~2, pp. 283--296,
  1988.

\bibitem{bernstein1995chronological}
D.~S. Bernstein and A.~N. Michel, ``A chronological bibliography on saturating
  actuators,'' 1995.

\bibitem{brezis2011functional}
H.~Br{\'e}zis, \emph{Functional analysis, {Sobolev} spaces and partial
  differential equations}.\hskip 1em plus 0.5em minus 0.4em\relax Springer,
  2011, vol.~2, no.~3.

\bibitem{campo1990robust}
P.~Campo and M.~Morari, ``Robust control of processes subject to saturation
  nonlinearities,'' \emph{Computers \& Chemical Engineering}, vol.~14, no. 4-5,
  pp. 343--358, 1990.

\bibitem{coron2004global}
J.-M. Coron and E.~Tr{\'e}lat, ``Global steady-state controllability of
  one-dimensional semilinear heat equations,'' \emph{SIAM Journal on Control
  and Optimization}, vol.~43, no.~2, pp. 549--569, 2004.

\bibitem{curtain1982finite}
R.~Curtain, ``Finite-dimensional compensator design for parabolic distributed
  systems with point sensors and boundary input,'' \emph{IEEE Transactions on
  Automatic Control}, vol.~27, no.~1, pp. 98--104, 1982.

\bibitem{dubljevic2006predictive}
S.~Dubljevic, N.~H. El-Farra, P.~Mhaskar, and P.~D. Christofides, ``Predictive
  control of parabolic pdes with state and control constraints,''
  \emph{International Journal of Robust and Nonlinear Control: IFAC-Affiliated
  Journal}, vol.~16, no.~16, pp. 749--772, 2006.

\bibitem{el2003analysis}
N.~H. El-Farra, A.~Armaou, and P.~D. Christofides, ``Analysis and control of
  parabolic pde systems with input constraints,'' \emph{Automatica}, vol.~39,
  no.~4, pp. 715--725, 2003.

\bibitem{da2005antiwindup}
J.~M. {Gomes da Silva Jr} and S.~Tarbouriech, ``Antiwindup design with
  guaranteed regions of stability: an lmi-based approach,'' \emph{IEEE
  Transactions on Automatic Control}, vol.~50, no.~1, pp. 106--111, 2005.

\bibitem{kang2017boundary}
W.~Kang and E.~Fridman, ``Boundary control of delayed {ODE}-heat cascade under
  actuator saturation,'' \emph{Automatica}, vol.~83, pp. 252--261, 2017.

\bibitem{katz2020constructive}
R.~Katz and E.~Fridman, ``Constructive method for finite-dimensional
  observer-based control of {1-D} parabolic {PDEs},'' \emph{Automatica}, vol.
  122, p. 109285, 2020.

\bibitem{lasiecka2003strong}
I.~Lasiecka and T.~I. Seidman, ``Strong stability of elastic control systems
  with dissipative saturating feedback,'' \emph{Systems \& Control Letters},
  vol.~48, no. 3-4, pp. 243--252, 2003.

\bibitem{lhachemi2020feedback}
H.~Lhachemi and C.~Prieur, ``Feedback stabilization of a class of diagonal
  infinite-dimensional systems with delay boundary control,'' \emph{IEEE
  Transactions on Automatic Control}, vol.~66, no.~1, pp. 105--120, 2020.

\bibitem{lhachemi2020finitePI}
------, ``Finite-dimensional observer-based {PI} regulation control of a
  reaction-diffusion equation,'' \emph{IEEE Transactions on Automatic Control},
  2021, in press.

\bibitem{lhachemi2020finite}
------, ``Finite-dimensional observer-based boundary stabilization of
  reaction-diffusion equations with either a {D}irichlet or {N}eumann boundary
  measurement,'' \emph{Automatica}, vol. 135, p. 109955, 2022.

\bibitem{lhachemi2021indomain}
H.~Lhachemi, C.~Prieur, and R.~Shorten, ``In-domain stabilization of block
  diagonal infinite-dimensional systems with time-varying input delays,''
  \emph{IEEE Transactions on Automatic Control}, vol.~66, no.~12, pp.
  6017--6024, 2021.

\bibitem{lhachemi2020pi}
H.~Lhachemi, C.~Prieur, and E.~Tr{\'e}lat, ``{PI} regulation of a
  reaction-diffusion equation with delayed boundary control,'' \emph{IEEE
  Transactions on Automatic Control}, vol.~66, no.~4, pp. 1573--1587, 2020.

\bibitem{marx2017cone}
S.~Marx, V.~Andrieu, and C.~Prieur, ``Cone-bounded feedback laws for
  m-dissipative operators on hilbert spaces,'' \emph{Mathematics of Control,
  Signals, and Systems}, vol.~29, no.~4, pp. 1--32, 2017.

\bibitem{marx2017global}
S.~Marx, E.~Cerpa, C.~Prieur, and V.~Andrieu, ``Global stabilization of a
  korteweg--de vries equation with saturating distributed control,'' \emph{SIAM
  Journal on Control and Optimization}, vol.~55, no.~3, pp. 1452--1480, 2017.

\bibitem{mavkov2017distributed}
B.~Mavkov, E.~Witrant, and C.~Prieur, ``Distributed control of coupled
  inhomogeneous diffusion in tokamak plasmas,'' \emph{IEEE Transactions on
  Control Systems Technology}, vol.~27, no.~1, pp. 443--450, 2017.

\bibitem{mironchenko2020local}
A.~Mironchenko, C.~Prieur, and F.~Wirth, ``Local stabilization of an unstable
  parabolic equation via saturated controls,'' \emph{IEEE Transactions on
  Automatic Control}, vol.~66, no.~5, pp. 2162--2176, 2020.

\bibitem{orlov2017general}
Y.~Orlov, ``On general properties of eigenvalues and eigenfunctions of a
  {Sturm--Liouville} operator: comments on ``{ISS} with respect to boundary
  disturbances for {1-D} parabolic {PDEs}'','' \emph{IEEE Transactions on
  Automatic Control}, vol.~62, no.~11, pp. 5970--5973, 2017.

\bibitem{pazy2012semigroups}
A.~Pazy, \emph{Semigroups of linear operators and applications to partial
  differential equations}.\hskip 1em plus 0.5em minus 0.4em\relax Springer
  Science \& Business Media, 2012, vol.~44.

\bibitem{prieur2016wave}
C.~Prieur, S.~Tarbouriech, and J.~M. {Gomes da Silva Jr}, ``Wave equation with
  cone-bounded control laws,'' \emph{IEEE Transactions on Automatic Control},
  vol.~61, no.~11, pp. 3452--3463, 2016.

\bibitem{russell1978controllability}
D.~L. Russell, ``Controllability and stabilizability theory for linear partial
  differential equations: recent progress and open questions,'' \emph{{SIAM}
  Review}, vol.~20, no.~4, pp. 639--739, 1978.

\bibitem{sakawa1983feedback}
Y.~Sakawa, ``Feedback stabilization of linear diffusion systems,'' \emph{SIAM
  journal on control and optimization}, vol.~21, no.~5, pp. 667--676, 1983.

\bibitem{slemrod1989feedback}
M.~Slemrod, ``Feedback stabilization of a linear control system in {Hilbert}
  space with an a priori bounded control,'' \emph{Mathematics of Control,
  Signals and Systems}, vol.~2, no.~3, pp. 265--285, 1989.

\bibitem{smagina2002stabilization}
Y.~Smagina, O.~Nekhamkina, and M.~Sheintuch, ``Stabilization of fronts in a
  reaction-diffusion system: Application of the {Gershgorin} theorem,''
  \emph{Industrial \& engineering chemistry research}, vol.~41, no.~8, pp.
  2023--2032, 2002.

\bibitem{tarbouriech2011stability}
S.~Tarbouriech, G.~Garcia, J.~M. da~{Gomes da Silva Jr}, and I.~Queinnec,
  \emph{Stability and stabilization of linear systems with saturating
  actuators}.\hskip 1em plus 0.5em minus 0.4em\relax Springer Science \&
  Business Media, 2011.

\bibitem{wang2021delay}
S.~Wang, M.~Diagne, and J.~Qi, ``Delay-adaptive predictor feedback control of
  reaction-advection-diffusion {PDEs} with a delayed distributed input,''
  \emph{IEEE Transactions on Automatic Control}, 2021.

\bibitem{wang2020regional}
Y.~Wang, K.~Xu, Y.~Kang, H.~Wang, F.~Wang, and A.~Avram, ``Regional influenza
  prediction with sampling twitter data and {PDE} model,'' \emph{International
  journal of environmental research and public health}, vol.~17, no.~3, p. 678,
  2020.

\bibitem{zaccarian2011modern}
L.~Zaccarian and A.~R. Teel, \emph{Modern anti-windup synthesis: control
  augmentation for actuator saturation}.\hskip 1em plus 0.5em minus 0.4em\relax
  Princeton University Press, 2011.

\end{thebibliography}

\end{document}